\documentclass[11pt]{article}
\usepackage[utf8]{inputenc}
\usepackage[margin=2.5cm]{geometry}
\usepackage{amsmath,amssymb,amsthm}
\usepackage{booktabs,array,enumitem}
\usepackage{tikz}
\usepackage[hidelinks]{hyperref}

\newtheorem{theorem}{Theorem}[section]
\newtheorem{lemma}[theorem]{Lemma}
\newtheorem{proposition}[theorem]{Proposition}

\theoremstyle{definition}
\newtheorem{definition}[theorem]{Definition}
\newtheorem{remark}[theorem]{Remark}

\newcommand{\cV}{\mathcal{V}}
\newcommand{\cL}{\mathcal{L}}
\newcommand{\cR}{\mathcal{R}}
\newcommand{\cD}{\mathcal{D}}
\newcommand{\ox}{\otimes}

\title{The domination number of the $2$-token graph of path graphs}

\author{E. Acosta Troncoso\thanks{Unidad Acad\'emica de Matem\'aticas, Universidad Aut\'onoma de Zacatecas,
Calzada Solidaridad y Paseo La Bufa, Col. Hidr\'aulica, C.P. 98060, Zacatecas, Zac., Mexico.
E-mail: emmanuel\_acosta22@yahoo.com, luismanuel.rivera@gmail.com (corresponding author).}
\and L.~M.~Rivera\footnotemark[1]}
\date{}

\begin{document}
\maketitle

\begin{abstract}
We prove that the domination number of the $2$-token graph of the path $P_n$ is
$\gamma(F_2(P_n))=d(n)$ for every $n\ge 13$, where $d(n)=\frac1{10}(n^2+5n+c)$ and
$c$ is an explicit constant that depends on $n \bmod 5$. This settles a conjecture by Lea\~nos and the authors, who previously proved the upper bound. The lower bound is computer-assisted and follows the method used by
Gon\c{c}alves, Pinlou, Rao and Thomass\'e for grid graphs.
\end{abstract}

\noindent{\it Keywords:} token graphs; domination number; computer-assisted proof.\\
{\it AMS Subject Classification:} 05C69; 05C76.

\section{Introduction}

Let \(G\) be a simple graph of order \(n \ge 2\). The \emph{\(2\)-token graph} \(F_2(G)\) of \(G\) is the graph whose vertices are the \(2\)-subsets of \(V(G)\), where two vertices are adjacent if and only if their symmetric difference is an edge in \(G\) (see~\cite{FFHH}). The structural properties of \(2\)-token graphs, as well as their generalization to \(k\)-token graphs, have been studied under various names, such as \emph{double vertex graphs}~\cite{alavi2,alavi3} and \emph{symmetric powers of graphs}~\cite{aude}. In recent years, interest in these structures has grown significantly following the work of Fabila-Monroy et al.~\cite{FFHH} (see, e.g.,~\cite{alba,dalfo10,FLT,GR}).

 A set $D\subseteq V(G)$ is
\emph{dominating} if every vertex not in $D$ has a neighbor in $D$, and the \emph{domination number}
$\gamma(G)$ is the minimum size of a dominating set of $G$.

Domination in token graphs has been studied in~\cite{sergio, prabhu}. For the path $P_n$, the graph
$F_2(P_n)$ is a triangular piece of the square grid, and the problem is close to the determination of the
domination number of grid graphs. That problem has a long history: the exact values for grids with at most six rows were obtained
in~\cite{ChangClark}; Chang~\cite{ChangThesis} conjectured the general formula; dynamic programming
methods were developed in~\cite{fisher,HHH} (see also~\cite{alanko,chang} and the references therein); and
Guichard~\cite{guichard} proved the conjectured lower bound up to an additive constant. The problem was
completed by Gon\c{c}alves, Pinlou, Rao and Thomass\'e~\cite{goncalves}, who
proved Chang's conjecture: $\gamma(P_m\square P_n)=\lfloor (n+2)(m+2)/5\rfloor-4$ for $m\ge n\ge 16$. Their
proof is computer-assisted, and their method was later applied to the $2$-domination and Roman domination
numbers of grids~\cite{RaoTalon}.

In~\cite{ALR} it was shown that $\gamma(F_2(P_n))=\frac{n^2}{10}+\Theta(n)$. More precisely, let
\[
d(n):=\begin{cases}
 \frac{1}{10}(n^2+5n-30) & n \equiv 0 \pmod 5, \\
  \frac{1}{10}(n^2+5n-36)  & n \equiv 1 \pmod 5 \; \textup{or } n \equiv 4 \pmod 5,\\
\frac{1}{10}(n^2+5n-34) & n \equiv 2\pmod 5  \;  \textup{or } n \equiv 3 \pmod 5.
\end{cases}
\]
Throughout the paper we write $r=n \bmod 5$ and $d(n)=\frac{1}{10}(n^2+5n+c_r)$; that is,
$(c_0,c_1,c_2,c_3,c_4)=(-30,-36,-34,-34,-36)$.

\begin{theorem}[\cite{ALR}]\label{thm:upper}
If $n\ge 13$, then $\gamma(F_2(P_n))\le d(n)$.
\end{theorem}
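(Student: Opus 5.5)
The upper bound asks for an explicit dominating set of $F_2(P_n)$ of size $d(n)$ for every $n\ge 13$. I would build it directly. Identify the vertex $\{i,j\}$ with $1\le i<j\le n$ with the lattice point $(i,j)$. Then $F_2(P_n)$ is the subgraph of the square grid induced by the triangle $T_n=\{(i,j):1\le i<j\le n\}$, with $(i,j)\sim(i\pm1,j)$ and $(i,j)\sim(i,j\pm1)$ whenever both endpoints lie in $T_n$. The natural starting point is the classical perfect-domination pattern of $\mathbb{Z}^2$,
\[
S_a=\{(i,j): i+2j\equiv a \pmod 5\}.
\]
Here every lattice point is either in $S_a$ or adjacent to exactly one point of $S_a$, and $S_a$ has density $1/5$. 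Intersecting $S_a$ with $T_n$ gives about $|T_n|/5=n(n-1)/10$ points. The vertices near the boundary of $T_n$ whose dominator falls outside $T_n$ are then left undominated.

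The boundary consists of three pieces: the diagonal $j=i+1$, the row $i=1$, and the column $j=n$. Each has length about $n$, and on each piece roughly a fraction $1/5$ to $2/5$ of the vertices lose their dominator. Choosing $a$ well should cost about $n/2$ extra vertices in total, matching the $5n/10$ term in $d(n)$. So I would fix boundary repairs as follows.
\begin{enumerate}
\item For each boundary vertex whose dominator is missing, replace the outside dominator by its projection onto the boundary, shifting an element of $S_a$ to the edge.
\item Along the diagonal, exploit the fact that the dominators of the missing points $(i,i)$ can be moved to $(i,i+1)$. One such vertex may serve two consecutive undominated positions.
\end{enumerate}
Next, compute the total $|S_a\cap T_n|+(\text{repairs})$ as an exact quasi-polynomial in $n$ with period $5$. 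Choose, for each residue $r=n\bmod 5$, the shift $a$ (and possibly the reflection $(i,j)\mapsto(n+1-j,n+1-i)$, which is an automorphism of $T_n$) that minimizes it.

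The corners are the delicate part: near $(1,2)$, $(1,n)$ and $(n-1,n)$ the patterns along two edges must be patched together. This is where the constant $c_r$ is decided. I expect this to be the main obstacle, namely producing corner gadgets whose local savings give exactly $c_r\in\{-30,-36,-34,-34,-36\}$ rather than a slightly worse constant. My first attempt is a small computer search: for each $r$ and a modest size $n_0\in\{13,\dots,17\}$, find minimum dominating sets of $T_{n_0}$ agreeing with $S_a$ away from the corners. I would then read off the three corner gadgets.

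Finally I would prove that the construction extends from $n$ to $n+5$ by induction. Passing from $T_n$ to $T_{n+5}$ adds a strip of width $5$ along the column $j=n$, and since the pattern is $5$-periodic it can be inserted consistently. The count should grow by
\[
d(n+5)-d(n)=\tfrac{1}{10}\bigl(10n+50\bigr)=n+5,
\]
which is precisely $|S_a|$ restricted to such a strip plus the constant number of repair vertices on its diagonal and top ends. Verifying this increment, and that domination is preserved across the seam, is routine once the periodic pattern and the gadgets are fixed. The base cases $n=13,\dots,17$ are checked directly, which gives $\gamma(F_2(P_n))\le d(n)$ for all $n\ge 13$.
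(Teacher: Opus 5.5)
This paper does not reprove the statement (it is quoted from \cite{ALR}), so the question is whether your plan is itself a proof. It is not yet: one step is wrong, and the decisive part is missing.

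\textbf{The wrong step is the diagonal repair.} In your coordinates the diagonal vertex $(i,i+1)$ has only the neighbors $(i-1,i+1)$ and $(i,i+2)$, so no two diagonal vertices are adjacent. The lattice neighborhood of $(i,i+1)$ has $i+2j\equiv 3i,\dots,3i+4$, with $3i$ and $3i+3$ at the outside points $(i,i)$ and $(i+1,i+1)$. Hence the undominated diagonal vertices are exactly the consecutive pairs $(i-1,i),(i,i+1)$ with $3i\equiv a\pmod 5$, two in every five. Adding $(i,i+1)$ repairs only one vertex of such a pair. The vertex that repairs both is their common neighbor $(i-1,i+1)$, i.e.\ the $2$-set $\{i-1,i+1\}$.

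There is no slack to absorb this. Since $|S_a\cap T_n|=n(n-1)/10+O(1)$, the repair budget is $d(n)-n(n-1)/10=3n/5+c_r/10$, not $n/2$. That is one added vertex per five boundary vertices on average over the three sides. The projections on the two straight sides already use $n/5$ each, so a diagonal repaired at rate $2/5$ overshoots $d(n)$ by about $n/5$.

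\textbf{The missing part is the constant.} With every side repaired at rate $1/5$, one checks that $|S_a\cap T_{n+5}|-|S_a\cap T_n|=n+2$ for every $a$. So any such $5$-periodic construction satisfies $|D_{n+5}|-|D_n|=n+5=d(n+5)-d(n)$ automatically, and the inductive count carries no information. The entire content of the theorem is the additive constant $c_r/10$. That constant is decided by the three corner gadgets, which are exactly what you leave to a computer search whose outcome you do not exhibit.

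That search also does not fit your induction. Growing $T_n$ to $T_{n+5}$ is not an append. The repairs on the old side $j=n$ and the gadgets at $(1,n)$ and $(n-1,n)$ become interior and must be undone and rebuilt. Alternatively you can insert an L-shaped strip of width $5$ via $(i,j)\mapsto(i+5[i>c],\,j+5[j>c])$. Either way, $D_n$ must coincide with $S_a$ and the standard side repairs along the seam. A minimum dominating set of $T_{n_0}$ with $n_0\le 17$ has no reason to have that form.

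To obtain a proof you need to:
\begin{enumerate}
\item write down, for each residue $r$, a phase $a$ and explicit corner modifications in fixed windows;
\item verify domination locally, which is a finite check by periodicity;
\item count exactly $d(n_0)$ at five consecutive base values $n_0$ large enough for the three windows to be disjoint;
\item check the finitely many $13\le n<n_0$ separately.
\end{enumerate}
As you note, producing the gadgets is the main obstacle; it is in fact the whole theorem.
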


The lower bound in~\cite{ALR} comes from the packing number~\cite{gomez} and differs from $d(n)$ by about
$3n/5$. It was conjectured in~\cite{ALR} that $\gamma(F_2(P_n))=d(n)$ for all $n\ge13$, the exact values
being known for $n\le25$. In this paper we prove this conjecture by means of the following:

\begin{theorem}\label{thm:lower}
If $n\ge 13$, then $\gamma(F_2(P_n))\ge d(n)$.
\end{theorem}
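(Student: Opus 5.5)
We follow the method of Gon\c{c}alves, Pinlou, Rao and Thomass\'e. Identify $F_2(P_n)$ with the staircase region $T_n=\{(i,j):1\le i<j\le n\}$ of the square grid, so that $\{i,j\}$ corresponds to the cell $(i,j)$. Each dominating vertex covers at most $5$ cells, so for a dominating set $D$
\[
5|D|\ \ge\ |V(T_n)| + \text{(waste)},\qquad |V(T_n)|=\binom n2 ,
\]
where the waste counts cells dominated more than once plus "missing" neighbours of dominators placed on the boundary. The target $d(n)=\frac1{10}(n^2+5n+c_r)$ gives $5d(n)=\binom n2+3n+\frac{c_r}{2}$. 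The proof therefore reduces to showing that any dominating set wastes at least about $3n+c_r/2$ near the boundary of $T_n$. The boundary consists of two straight sides (the row $i=1$ and the column $j=n$) and the diagonal staircase $j=i+1$, where each cell has only two or three neighbours.

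The first step is a local loss function. Following~\cite{goncalves}, for a dominating set $D$ we define a loss $\ell(D)=5|D|-|V|$ and distribute it as a sum of local contributions. We charge each overdominated cell, and each boundary-adjacent dominator's missing neighbours, to cells within a bounded distance of the boundary. We then cut $T_n$ into a bounded number of pieces: three corner regions of fixed size, two straight border strips of depth $k$ (say $k\approx 8$–$10$), and a diagonal border strip of depth $k$. The interior contributes nonnegative loss and can be ignored. For each strip we compute, by a transfer-matrix/min-plus dynamic program over columns (respectively diagonal layers) of width $k$, the minimum loss per unit length for configurations that dominate the boundary cells. The states record which cells in the current slice lie in $D$ and which are still undominated, as in~\cite{goncalves}. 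The minimum mean-cycle weight $\lambda$ of this automaton gives the asymptotic loss rate. We expect rates summing to $3$ per unit of $n$ over the three sides, presumably $1$ per straight side and $1$ for the staircase side, though the split may differ. Exact computation of the finite-length minimum, via powers of the min-plus matrix, also yields a constant term uniform in $n$.

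The second step handles the corners and the residue $c_r$. Each of the three corners is computed by exhaustive search or DP on a fixed triangle or square, with boundary conditions matching the strip automata. Adding the corner minima to the strip minima gives $5|D|\ge \binom n2+3n+C(n)$, with $C(n)$ depending only on $n\bmod 5$ once $n$ is large, by periodicity of the min-plus powers. Since $|D|$ is an integer, we then take $|D|\ge\lceil(\binom n2+3n+C(n))/5\rceil$. The aim is to check that this equals $d(n)$, which is integral; rounding should absorb slack of up to $4$. For the finitely many $n$ between $13$ and the threshold where periodicity kicks in, we use the known exact values for $n\le 25$, together with direct DP, provided the threshold is at most $25$.

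The main obstacle is the additive constant. A naive per-edge rate gives the bound only up to $O(1)$, as in Guichard's result. Getting the exact $c_r$ requires the losses to be sharply accounted for at the corners and at the junctions between strips, without double counting. The delicate part is the diagonal staircase side, which has different local geometry from the grid case. I would first try strips of depth around $10$ with overlapping junction windows, and then increase the depth until the computed lower bound matches $d(n)$ for every residue class.
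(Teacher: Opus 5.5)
Your outline follows the paper's method in broad strokes: the loss $\ell$, restriction to a band by monotonicity, straight and diagonal transfer matrices with rate $1$ per side, corner pieces, eventual periodicity of $(\min,+)$ powers, and the mod-$5$ rounding of Lemma~\ref{lem:target}. However, two steps of your plan would not go through as stated.

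The first is the finite range. You hope that periodicity sets in by $n\le 25$, so that the known values close the argument. In fact, for $(k,e)=(10,6)$ the matrices become exactly periodic only from $p=105$, i.e.\ $n\ge135$ (Theorem~\ref{thm:periodic}). A dynamic program on the whole triangle has a frontier of about $n/2$ cells, so it is practical only for small $n$. The paper uses it for $13\le n\le33$, which already goes beyond the values known for $n\le25$. For the range in between, you must evaluate the coupled band bound separately for each $n$, with $C^{\cV}_p=C^{\cV}_0\ox T_s^{\ox p}$ and the appropriate $C^{\cD}_m$, and check it against $\nu(n)$ (Proposition~\ref{prop:finite}). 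Moreover, for $33\le n\le38$ the decomposition hypothesis $(\star)$ with $e=6$ fails, since it requires $n\ge 2(k+e+2)+3=39$. A thinner diagonal strip ($e=3$, $a=15$) has to be used there.

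The second is the additive constant, where your plan has no margin to spare. Even with the exact coupled gluing of Theorem~\ref{thm:glue}, the natural band $B_n(10,6)$ gives exactly $\nu(n)-1$ for every $n\equiv0\pmod5$ (e.g.\ $400$ against $401$ at $n=140$). The paper's remedy is not a deeper strip but a single extra vertex $c^*=(11,n-11)$ at the right-angle corner (Lemma~\ref{lem:extra}). This comes with modified gluing matrices $L^\theta_s$ that charge the two exterior vertices $z_1,z_2$, now adjacent to two pieces, and that require the newly interior vertex $\beta_k$ to be dominated. After this change the bound equals $\nu(n)$ with zero slack in that residue class. Your fallback of increasing the depth is untested and much more expensive ($|W_{11}|=19601$ states instead of $8119$).

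The zero slack also means that corner and strip contributions cannot be minimized independently and then added. The band is an annulus, so the bound has to be the closed coupling $\min_{u,u'}\bigl(A'_p[u,u']+G_m[u,u']\bigr)$. This includes the interaction terms $\lambda_s,\lambda_d$ at each cut, among them the contribution of the exterior vertex $z=(a-e,a+1)$, which is adjacent to both the acute corner and the diagonal piece.
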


The proof of Theorem~\ref{thm:lower} is computer-assisted and is an adaptation of the method used by
Gon\c{c}alves, Pinlou, Rao and Thomass\'e for grid graphs.

The rest of the paper is organized as follows. Section~\ref{sec:prelim} contains the definitions and notation used throughout the paper.
Section~\ref{sec:transfer} introduces the two transfer matrices. Section~\ref{sec:glue} decomposes the band
into four pieces and expresses $b$ as a $(\min,+)$ expression. Theorem~\ref{thm:lower} is proved in Section~\ref{sec:lower}.

\section{Preliminaries}\label{sec:prelim}

Let $I(n)$ be the graph with vertex set $V=\{(x,y)\in\mathbb{Z}^2: 1\le x\le y\le n-1\}$, where
two vertices are adjacent if they are at Euclidean distance one. The map $(x,y)\mapsto\{x,y+1\}$ is an
isomorphism from $I(n)$ to $F_2(P_n)$ (Figure~\ref{fig:I6}). For $S\subseteq V$ let $N[S]$ be its closed neighborhood and
$I(S)=\{v\in S: N[v]\subseteq S\}$ its \emph{interior}. Following~\cite{goncalves}, the \emph{loss} of $S$
is
\[
\ell(S)=5|S|-|N[S]|.
\]
For $B\subseteq V$ we set
\[
b(B)=\min\{\ell(D):\ D\subseteq B,\ I(B)\subseteq N[D]\}.
\]
\begin{figure}[ht]
\centering
\begin{tikzpicture}[scale=1.05,font=\scriptsize]
\draw[gray!70] (1,1)--(1,2);
\draw[gray!70] (1,2)--(2,2);
\draw[gray!70] (1,2)--(1,3);
\draw[gray!70] (1,3)--(2,3);
\draw[gray!70] (1,3)--(1,4);
\draw[gray!70] (1,4)--(2,4);
\draw[gray!70] (1,4)--(1,5);
\draw[gray!70] (1,5)--(2,5);
\draw[gray!70] (2,2)--(2,3);
\draw[gray!70] (2,3)--(3,3);
\draw[gray!70] (2,3)--(2,4);
\draw[gray!70] (2,4)--(3,4);
\draw[gray!70] (2,4)--(2,5);
\draw[gray!70] (2,5)--(3,5);
\draw[gray!70] (3,3)--(3,4);
\draw[gray!70] (3,4)--(4,4);
\draw[gray!70] (3,4)--(3,5);
\draw[gray!70] (3,5)--(4,5);
\draw[gray!70] (4,4)--(4,5);
\draw[gray!70] (4,5)--(5,5);
\node[draw,circle,inner sep=1.2pt,minimum size=17pt,fill=black,text=white] at (1,1) {$12$};
\node[draw,circle,inner sep=1.2pt,minimum size=17pt,fill=white] at (1,2) {$13$};
\node[draw,circle,inner sep=1.2pt,minimum size=17pt,fill=white] at (1,3) {$14$};
\node[draw,circle,inner sep=1.2pt,minimum size=17pt,fill=black,text=white] at (1,4) {$15$};
\node[draw,circle,inner sep=1.2pt,minimum size=17pt,fill=white] at (1,5) {$16$};
\node[draw,circle,inner sep=1.2pt,minimum size=17pt,fill=white] at (2,2) {$23$};
\node[draw,circle,inner sep=1.2pt,minimum size=17pt,fill=black,text=white] at (2,3) {$24$};
\node[draw,circle,inner sep=1.2pt,minimum size=17pt,fill=black,text=white] at (2,4) {$25$};
\node[draw,circle,inner sep=1.2pt,minimum size=17pt,fill=white] at (2,5) {$26$};
\node[draw,circle,inner sep=1.2pt,minimum size=17pt,fill=white] at (3,3) {$34$};
\node[draw,circle,inner sep=1.2pt,minimum size=17pt,fill=white] at (3,4) {$35$};
\node[draw,circle,inner sep=1.2pt,minimum size=17pt,fill=white] at (3,5) {$36$};
\node[draw,circle,inner sep=1.2pt,minimum size=17pt,fill=white] at (4,4) {$45$};
\node[draw,circle,inner sep=1.2pt,minimum size=17pt,fill=black,text=white] at (4,5) {$46$};
\node[draw,circle,inner sep=1.2pt,minimum size=17pt,fill=white] at (5,5) {$56$};
\end{tikzpicture}
\caption{The graph $I(6)\cong F_2(P_6)$; the vertex $(x,y)$ is labeled with the $2$-set $\{x,y+1\}$. The black vertices form a dominating set $S$ of size $5$, that is equal to $\gamma(F_2(P_6))$, with $\ell(S)=5\cdot5-15=10$.}\label{fig:I6}
\end{figure}

If $D$ is a dominating set of $I(n)$, then $\ell(D)=5|D|-\binom n2$, and it is easy to see that
$\ell(D)\ge b(B)$ for every $B$ (Lemma~\ref{lem:mono}). Hence
\begin{equation}\label{eq:basic}
5\,\gamma(F_2(P_n))\ \ge\ \tbinom n2+b(B)\qquad\text{for every } B\subseteq V .
\end{equation}
For integers $k\ge2$ and $e\ge1$, the \emph{band} is
\[
B_n(k,e)=\{(x,y)\in V:\ x\le k\ \text{ or }\ y\ge n-k\ \text{ or }\ y-x\le e\},
\]
a border of thickness $k$ along the two straight sides of the triangle and of thickness $e+1$ along the
diagonal side. Let
\[
B'_n=B_n(10,6)\cup\{(11,n-11)\}\quad \text{ and } \quad \nu(n)=3n+\frac{c_r}{2}-4 .
\]
See Figure~\ref{fig:band}.
\begin{figure}[ht]
\centering

\caption{The graph $I(64)$ and the band $B'_{64}$. Black: vertices of $I(B'_{64})$; gray: the other vertices of $B'_{64}$; red: the extra vertex $(11,n-11)$; white: vertices outside the band.}\label{fig:band}
\end{figure}

By equation~\eqref{eq:basic}, $\gamma(F_2(P_n))\ge d(n)$ holds as soon as $b(B)\ge\nu(n)$ for some $B$
(Lemma~\ref{lem:target}). The proof of the following result is given in Section~\ref{sec:periodic}. 

\begin{theorem}[computer-assisted]\label{thm:periodic}
For every $n\ge 135$,
\[
b(B'_n)=\begin{cases}
3n-19 & \text{if } n\equiv 0 \pmod 5,\\
3n-20 & \text{if } n\equiv 1,\ 2 \text{ or } 4 \pmod 5,\\
3n-18 & \text{if } n\equiv 3 \pmod 5.
\end{cases}
\]
In particular $b(B'_n)\ge\nu(n)$, with equality if and only if $n\equiv0\pmod5$.
\end{theorem}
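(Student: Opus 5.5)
The plan is to follow the Gonçalves–Pinlou–Rao–Thomassé scheme and compute $b(B'_n)$ exactly by $(\min,+)$ transfer matrices, then show the result is periodic in $n$.

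\textbf{Step 1: decompose the band.} Split $B'_n$ into four pieces:
\begin{itemize}
\item two straight arms, namely the strip $x\le 10$ and the strip $y\ge n-10$, each of thickness $10$;
\item the diagonal strip $y-x\le 6$;
\item corner regions where these strips meet: near $(1,1)$, near $(1,n-1)$, near $(n-1,n-1)$, and the inner corner containing the extra vertex $(11,n-11)$.
\end{itemize}
Each arm is a sequence of columns (respectively rows, or anti-diagonal slices for the diagonal strip) of bounded width.

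A state on a slice records, for each cell, one of three things: it is in $D$; it is already dominated from the left but not in $D$; or it is still waiting to be dominated from the right. Cells in $B\setminus I(B)$ need not be dominated, so for them the "waiting" requirement is dropped. The loss $\ell(D)=5|D|-|N[D]|$ is additive over slices once we charge, to each slice, $5$ per vertex of $D$ minus the newly covered vertices. Doubly covered vertices and neighbours outside $B$ then automatically contribute positive loss.

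This gives two transfer matrices $A$ (straight arm) and $A_\Delta$ (diagonal arm) over the $(\min,+)$ semiring. It also gives boundary vectors/matrices $C_1,\dots,C_4$ for the corners, computed by exhaustive search. This yields
\[
b(B'_n)=\min\bigl(C_1\otimes A^{\,n-a_1}\otimes C_2\otimes A_\Delta^{\,n-a_2}\otimes C_3\otimes A^{\,n-a_3}\otimes C_4\bigr),
\]
with fixed offsets $a_i$, or an equivalent closed-cycle trace. The precise gluing must be checked so that every vertex of $I(B'_n)$ is required to be dominated exactly once in the bookkeeping.

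\textbf{Step 2: ultimate periodicity.} Show that $A$ and $A_\Delta$ are ultimately periodic in the $(\min,+)$ sense: there exist $p$, $q$ and $\lambda$ with $A^{m+p}=A^m+p\lambda$ for all $m\ge q$, and similarly for $A_\Delta$. Growth $3n$ overall across three arms suggests $\lambda=1$ per column, with period $5$, matching the density $1/5$ of perfect grid domination.

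To make this feasible, first reduce the state space. Delete dominated states, namely those whose vector entry is at least another state's entry while the state is "more demanding". Also use the left–right symmetry of the triangle, so that only one straight matrix is needed.

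Then iterate vector–matrix products starting from the corner vectors, rather than forming full matrix powers. Once two normalized vectors $v_m$ and $v_{m+5}$ agree up to an additive constant, periodicity holds for all larger $m$. Doing the same for all three arms, and taking into account the coupled lengths, gives the closed form for all $n\ge N_0$. The four residue-dependent constants come from $n \bmod 5$ shifting the phases in which the arms meet the corners.

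\textbf{Step 3: small $n$ and the final inequality.} Evaluate the formula directly for $135\le n<N_0$, if $N_0>135$. The threshold $135$ is presumably where the arms become long enough for the periodic regime to be reached on every arm simultaneously.

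Finally compare with $\nu(n)=3n+c_r/2-4$:
\begin{itemize}
\item $r=0$: $3n-19=\nu(n)$.
\item $r=1,4$: $\nu(n)=3n-22<3n-20$.
\item $r=2$: $\nu(n)=3n-21<3n-20$.
\item $r=3$: $\nu(n)=3n-21<3n-18$.
\end{itemize}
So equality holds exactly when $5\mid n$.

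\textbf{Main obstacle.} I expect the computational size to be the hardest part. Width-$10$ slices with three states give up to $3^{10}$ states, and the corner matrices are far larger. The key is aggressive dominance pruning and computing only vectors, never full matrices, together with a careful correctness proof of the decomposition, especially at the inner corner with the extra vertex.
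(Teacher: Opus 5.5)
Your plan is correct and follows the paper's route. The paper cuts $B'_n$ into $\Pi_p$ (acute corner plus left strip), its mirror image $\Pi'_p$, the right corner $\cR'$ and the diagonal strip $\cD_m$; the extra vertex $(11,n-11)$ lies inside $\cR'$, so there are three corners, not four. It then writes $b(B'_n)=\min_{u,u'}\bigl(A'_p[u,u']+G_m[u,u']\bigr)$ as a closed-cycle trace and verifies entrywise that $C^{\cD}_{m+5}=C^{\cD}_m+5$ for $m\ge10$ and $C^{\cV}_{p+1}=C^{\cV}_p+1$ for $p\ge105$ (period $1$ on the straight strips, period $5$ on the diagonal one). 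From this it deduces $b(B'_{n+5})=b(B'_n)+15$ for $n\ge135$ and computes five consecutive base values directly.

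Two small corrections. First, a vertex outside $B$ that is adjacent to $D$ belongs to $N[D]$ and lowers the loss. Positive loss comes only from degree deficit on the sides of the triangle and from multiple coverage, for example of the exterior vertex $z$ adjacent to two pieces. Second, because the trace couples all boundary states, periodicity must hold with a single additive constant for the whole matrix, not merely for each separately normalized vector.
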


The threshold $135$ is the point from which the matrices involved are exactly periodic; it plays the
role of the value $126$ in~\cite[Fact 9]{goncalves}. For $13\le n\le33$ we compute $\gamma(F_2(P_n))$ exactly.


Every vertex of $I(n)$ has degree at most $4$. The vertices $(1,y)$ and $(x,n-1)$ form the two
\emph{straight sides} and the vertices $(x,x)$ the \emph{diagonal side}; a diagonal vertex $(x,x)$ with
$1<x<n-1$ has degree $2$. The reflection $\sigma(x,y)=(n-y,n-x)$ is an automorphism of $I(n)$ that
exchanges the two straight sides. We write $d=y-x$ for the \emph{diagonal index} of $(x,y)$.

\begin{proposition}\label{prop:loss}
For $S,S_1,S_2\subseteq V$:
\begin{enumerate}[label=\textup{(\roman*)},leftmargin=*]
\item $\ell(S)=\sum_{v\in S}(4-\deg v)+\sum_{w\in N[S]}\bigl(|N[w]\cap S|-1\bigr)\ge0$;
\item if $S_1\cap S_2=\emptyset$, then $\ell(S_1\cup S_2)=\ell(S_1)+\ell(S_2)+|N[S_1]\cap N[S_2]|$;
\item if $S'\subseteq S$, then $\ell(S')\le\ell(S)$;
\item $\ell(\sigma(S))=\ell(S)$.
\end{enumerate}
\end{proposition}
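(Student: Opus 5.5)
The plan is a double count over the pairs $(w,v)$ with $w\in N[S]$, $v\in S$ and $w\in N[v]$. For (i), count these pairs in two ways. Grouping by $v$ gives $\sum_{v\in S}|N[v]|=\sum_{v\in S}(\deg v+1)$. Grouping by $w$ gives $\sum_{w\in N[S]}|N[w]\cap S|$. Hence
\[
\sum_{w\in N[S]}\bigl(|N[w]\cap S|-1\bigr)=\sum_{v\in S}(\deg v+1)-|N[S]|.
\]
Adding $\sum_{v\in S}(4-\deg v)$ to both sides gives $5|S|-|N[S]|=\ell(S)$. Nonnegativity is immediate: every vertex of $I(n)$ has degree at most $4$, and each $w\in N[S]$ satisfies $|N[w]\cap S|\ge1$. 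This is the only place where the degree bound of the grid is used.

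For (ii), use disjointness to get $|S_1\cup S_2|=|S_1|+|S_2|$. Since $N[S_1\cup S_2]=N[S_1]\cup N[S_2]$, inclusion--exclusion gives
\[
|N[S_1\cup S_2]|=|N[S_1]|+|N[S_2]|-|N[S_1]\cap N[S_2]|.
\]
Substituting both into the definition of $\ell$ yields the identity directly.

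For (iii), apply (ii) with $S_1=S'$ and $S_2=S\setminus S'$. This gives $\ell(S)=\ell(S')+\ell(S\setminus S')+|N[S']\cap N[S\setminus S']|$. By (i) the middle term is nonnegative, and the last term is a cardinality, so $\ell(S)\ge\ell(S')$. The empty case is trivial since $\ell(\emptyset)=0$.

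For (iv), note that $\sigma$ is an automorphism, in fact an involution of $V$, as checked directly from $(x,y)\mapsto(n-y,n-x)$ preserving $1\le x\le y\le n-1$ and unit distances. Therefore $|\sigma(S)|=|S|$ and $N[\sigma(S)]=\sigma(N[S])$, so the two quantities defining $\ell$ are preserved. None of these steps is a real obstacle. The only point needing care is the double count in (i), which I would write out explicitly.
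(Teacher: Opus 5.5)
Your proposal is correct and follows essentially the same route as the paper. The paper uses the same double count $\sum_{v\in S}|N[v]|=\sum_{w\in N[S]}|N[w]\cap S|$ for (i), inclusion--exclusion on $N[S_1]\cup N[S_2]$ for (ii), (i) and (ii) with $S_1=S'$, $S_2=S\setminus S'$ for (iii), and the fact that $\sigma$ is an automorphism for (iv); you simply write out the details the paper leaves implicit.
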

\begin{proof}
(i) follows from $\sum_{v\in S}|N[v]|=\sum_{w\in N[S]}|N[w]\cap S|$; (ii) from
$|N[S_1\cup S_2]|=|N[S_1]|+|N[S_2]|-|N[S_1]\cap N[S_2]|$; (iii) from (i) and (ii); (iv) is clear.
\end{proof}

Let $\ell_n$ be the minimum of $\ell(D)$ over all dominating sets $D$ of $I(n)$. Since $N[D]=V$,
\begin{equation}\label{eq:ln}
\ell_n=5\,\gamma(F_2(P_n))-\tbinom n2 .
\end{equation}

\begin{lemma}\label{lem:mono}
If $B\subseteq B'\subseteq V$, then $b(B)\le b(B')$. In particular $b(B)\le b(V)=\ell_n$.
\end{lemma}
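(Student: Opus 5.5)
The plan is to compare the two minimization problems directly. Any set that is feasible for $B'$ should yield a feasible set for $B$ whose loss is no larger. The one subtlety is the direction of the inclusion of interiors, and I would settle that first.

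For $S\subseteq S'$ we have $I(B)\subseteq I(B')$. Indeed, if $v\in B$ satisfies $N[v]\subseteq B$, then $v\in B'$ and $N[v]\subseteq B'$.

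Now fix $D'\subseteq B'$ with $I(B')\subseteq N[D']$ and $\ell(D')=b(B')$. Such a $D'$ exists because $D'=B'$ itself is feasible, since $I(B')\subseteq B'\subseteq N[B']$, so the minimum is over a nonempty finite family. Set $D=D'\cap B$. Then $D\subseteq B$, and by Proposition~\ref{prop:loss}(iii) we get $\ell(D)\le\ell(D')$.

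The main step is to check that $D$ still dominates $I(B)$. Take $u\in I(B)$. Then $u\in I(B')\subseteq N[D']$, so some $w\in D'$ lies in $N[u]$. Since $u\in I(B)$, we have $N[u]\subseteq B$, hence $w\in B$ and therefore $w\in D$. Thus $I(B)\subseteq N[D]$, and so
\[
b(B)\le\ell(D)\le\ell(D')=b(B').
\]

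For the second assertion, take $B'=V$. Then $I(V)=V$, because closed neighbourhoods are taken inside $V$. So the feasible sets for $b(V)$ are exactly the dominating sets of $I(n)$, which gives $b(V)=\ell_n$ by definition.

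The only point needing care is the domination step. Restricting $D'$ to $B$ works precisely because vertices of the interior $I(B)$ have all their neighbours in $B$. Everything else is bookkeeping, together with the monotonicity of loss from Proposition~\ref{prop:loss}(iii).
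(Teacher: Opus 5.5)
Your proof is correct and follows the paper's argument: you restrict an optimal $D'$ to $B$, and since $I(B)\subseteq I(B')$ and $N[v]\subseteq B$ for every $v\in I(B)$, each such $v$ is dominated from $D'\cap B$; Proposition~\ref{prop:loss}(iii) then gives $b(B)\le\ell(D)\le\ell(D')=b(B')$. One small slip: the phrase ``For $S\subseteq S'$'' in your interior step should read ``For $B\subseteq B'$''.
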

\begin{proof}
Let $D'\subseteq B'$ dominate $I(B')$ with $\ell(D')=b(B')$ and let $D=D'\cap B$. If $v\in I(B)$, then
$v\in I(B')$ and it is dominated by some $u\in D'\cap N[v]\subseteq D$. Thus $D$ dominates $I(B)$ and
$b(B)\le\ell(D)\le\ell(D')$ by Proposition~\ref{prop:loss}(iii).
\end{proof}

\begin{lemma}\label{lem:target}
$\gamma(F_2(P_n))\ge d(n)$ if and only if $\ell_n\ge\nu(n)=3n+\frac{c_r}2-4$. In particular it suffices
that $b(B)\ge 3n-19$, $3n-22$, $3n-21$, $3n-21$, $3n-22$ for $n\equiv0,1,2,3,4\pmod 5$, for some $B$.
\end{lemma}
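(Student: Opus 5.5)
The plan is to combine identity~\eqref{eq:ln} with the closed form of $d(n)$, so that the whole statement reduces to a short computation.

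\textbf{Equivalence.} Since $\gamma(F_2(P_n))$ is an integer, \eqref{eq:ln} gives
\[
\gamma(F_2(P_n))\ge d(n)\iff 5\gamma(F_2(P_n))-\binom n2\ge 5d(n)-\binom n2 .
\]
The right-hand side simplifies as
\[
5d(n)-\binom n2=\frac{n^2+5n+c_r}{2}-\frac{n^2-n}{2}=3n+\frac{c_r}{2}.
\]
So the exact equivalence is $\gamma\ge d(n)\iff \ell_n\ge 3n+c_r/2$.

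\textbf{Reconciling with $\nu(n)$.} The statement uses $\nu(n)=3n+c_r/2-4$, which is smaller by $4$, so I will close this gap with a parity/congruence argument. By \eqref{eq:ln}, $\ell_n\equiv-\binom n2\pmod 5$, so $\ell_n$ lies in a fixed residue class mod~$5$. The quantity $3n+c_r/2$ has the same residue, because $5d(n)$ is divisible by $5$. Hence $\ell_n\ge 3n+c_r/2-4$ forces $\ell_n\ge 3n+c_r/2$, since the next value in the class below $3n+c_r/2$ is $3n+c_r/2-5$. The converse is trivial. This gives the "if and only if" with $\nu(n)$.

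\textbf{The "in particular".} Lemma~\ref{lem:mono} gives $\ell_n=b(V)\ge b(B)$ for every $B$, so it suffices that $b(B)\ge\nu(n)$. Substituting $(c_0,\dots,c_4)=(-30,-36,-34,-34,-36)$ gives
\[
\nu(n)=3n-19,\ 3n-22,\ 3n-21,\ 3n-21,\ 3n-22
\]
for $r=0,1,2,3,4$ respectively, which are exactly the listed thresholds.

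The only delicate step is the congruence argument. I need to check that $d(n)$ is indeed an integer, i.e.\ $n^2+5n+c_r\equiv0\pmod{10}$ in each residue class, which is a routine case check on $n \bmod 10$. Beyond that, the step just uses the fact that $\ell_n$ and $3n+c_r/2$ are congruent mod~$5$.
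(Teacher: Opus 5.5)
Your proof is correct and follows the paper's argument: the identity $5d(n)-\binom n2=3n+\frac{c_r}{2}$, the fact that $\ell_n$ and $3n+\frac{c_r}{2}$ are both congruent to $-\binom n2$ modulo $5$ (which lets you lower the threshold by $4$), and Lemma~\ref{lem:mono} for the final assertion. Your explicit check that $d(n)$ is an integer, which the congruence for $3n+\frac{c_r}{2}$ needs, is a detail the paper leaves implicit.
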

\begin{proof}
A direct computation gives $5d(n)-\binom n2=3n+\frac{c_r}{2}$, so by \eqref{eq:ln},
$\gamma(F_2(P_n))\ge d(n)$ if and only if $\ell_n\ge 3n+\frac{c_r}{2}$. Again by \eqref{eq:ln}, both
$\ell_n$ and $3n+\frac{c_r}{2}$ are congruent to $-\binom n2$ modulo $5$; hence
$\ell_n\ge 3n+\frac{c_r}{2}$ if and only if $\ell_n\ge 3n+\frac{c_r}{2}-4$. The last assertion follows from Lemma~\ref{lem:mono}.
\end{proof}

For $S\subseteq V$ we use the labeling $\phi_S:V\to\{0,1,2\}$ of~\cite{goncalves}: 
\[
\phi_S(v) = \begin{cases}
0 & \text{if } v \in S, \\
1 & \text{if } v \in N[S] \setminus S, \\
2 & \text{otherwise.}
\end{cases}
\]
If $\tau=(\tau_1,\dots,\tau_t)$ is a path of $I(n)$, then $\phi_S(\tau)$ is a word of length $t$ over
$\{0,1,2\}$ with no factor $02$ or $20$. Let $W_t$ be the set of such words; then~(see \cite{fisher, goncalves})
\[
|W_t|=\frac12\bigl((1+\sqrt2)^{t+1}+(1-\sqrt2)^{t+1}\bigr).
\]
So $|W_{10}|=8119$ and $|W_7|=577$. The \emph{Iverson bracket} notation converts a logical proposition $P$ into a binary value and is defined as:
\[
[P] = \begin{cases} 
1 & \text{if } P \text{ is true,} \\ 
0 & \text{if } P \text{ is false.} 
\end{cases}
\]
For a word $w$, $|w|_i$ is the number of occurrences of the letter $i$. Matrices have entries in $\mathbb{Z}\cup\{+\infty\}$, and $\ox$ denotes the matrix product in
the $(\min,+)$ algebra: $(X\ox Y)[i,j]=\min_h X[i,h]+Y[h,j]$.

\section{Transfer matrices}\label{sec:transfer}

\subsection{The straight strip}

Fix $k\ge2$. The \emph{row} $y$ of the left strip is $\rho_y=(\rho_y^1,\dots,\rho_y^k)$ with
$\rho_y^x=(x,y)$. Away from the corners the neighbors of $\rho_y^x$ are $\rho_y^{x\pm1}$,
$\rho_{y\pm1}^x$ and, for $x=k$, the vertex $(k+1,y)$, which has no other neighbor in the strip. Words of
$W_s:=W_k$ are indexed by $x=1,\dots,k$. See Figure~\ref{fig:rows}.

\begin{figure}[ht]
\centering
\begin{tikzpicture}[scale=1.0,font=\scriptsize]
\draw[very thick] (0.4,-0.6)--(0.4,2.6);
\foreach \y in {0,1,2}{ \draw[gray!70] (1,\y)--(4,\y); \draw[dashed] (4,\y)--(5,\y); }
\foreach \x in {1,2,3,4}{ \draw[gray!70] (\x,1)--(\x,2); \draw[gray!40] (\x,-0.5)--(\x,0); \draw[gray!40] (\x,2)--(\x,2.5);}
\foreach \x in {1,2,3,4}{ \draw[line width=1.4pt] (\x,0)--(\x,1); }
\foreach \y in {0,1,2}{ \foreach \x in {1,2,3,4}{ \fill (\x,\y) circle (2.2pt); } \node[draw,fill=white,inner sep=2.2pt] at (5,\y) {}; }
\foreach \x in {1,2,3,4}{ \node[below right] at (\x,0) {$\rho_y^{\x}$}; \node[above right] at (\x,1) {$\rho_{y+1}^{\x}$}; }
\node[right] at (5.2,0) {$(k+1,y)$};
\node[right] at (5.2,1) {$(k+1,y+1)$};
\node[right] at (5.2,2) {row $y+2$};
\node[left] at (0.4,1) {side $x=1$};
\end{tikzpicture}
\caption{Rows $y$, $y+1$, $y+2$ of the left strip for $k=4$. Thick edges: $\rho_y^x\rho_{y+1}^x$. Squares:
vertices of column $k+1$, outside the strip.}\label{fig:rows}
\end{figure}

\begin{definition}\label{def:Ts}
For $w,w'\in W_s$ let
\[
\Delta_s(w,w')=4\,|w'|_0-\bigl|\{x: w_x=2,\ w'_x=0\}\bigr|-\bigl|\{x: w'_x\ne2,\ w_x\ne0\}\bigr|-[w'_k=0],
\]
and define the $|W_s|\times|W_s|$ matrix $T_s$ by
\[
T_s[w,w']=\left\{
\setlength{\arraycolsep}{2.5pt}\begin{array}{lll}
+\infty & \text{if } \exists\,x,\ 1\le x\le k, \text{ s.t. } w_x=0 \text{ and } w'_x=2, & \textup{(Q1)}\\[2pt]
+\infty & \text{if } \exists\,x,\ 1\le x\le k, \text{ s.t. } w'_x=1,\ w_x\ne0,\ w'_{x-1}\ne0 \text{ and } w'_{x+1}\ne0, & \textup{(Q2)}\\[2pt]
+\infty & \text{if } \exists\,x,\ 1\le x\le k-1, \text{ s.t. } w_x=2 \text{ and } w'_x\ne0, & \textup{(Q3)}\\[2pt]
\Delta_s(w,w') & \text{otherwise,}
\end{array}\right.
\]
with the convention $w'_0=w'_{k+1}=2$.
\end{definition}

\begin{lemma}\label{lem:Ts}
Let $k\le y\le n-3$, let $S\subseteq V$ be a set all of whose vertices lie in rows $\le y$ with
$(k+1,y)\notin S$, and let $S'=S\cup A$ with $A\subseteq\rho_{y+1}$. Put $w=\phi_S(\rho_y)$ and
$w'=\phi_{S'}(\rho_{y+1})$. Then \textup{(Q1)} and \textup{(Q2)} do not hold, and
\[
\ell(S')-\ell(S)=\Delta_s(w,w').
\]
Moreover, the vertices $\rho_y^x$ with $x\le k-1$ are all dominated by $S'$ if and only if
\textup{(Q3)} does not hold.
\end{lemma}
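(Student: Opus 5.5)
The plan is to compute the change in loss directly from the definition. Since $S'=S\cup A$ with $A\cap S=\emptyset$ (as $S$ lies in rows $\le y$), we have
\[
\ell(S')-\ell(S)=5|A|-\bigl|N[S']\setminus N[S]\bigr| .
\]
Note that $|A|=|w'|_0$, because a vertex of $\rho_{y+1}$ lies in $S'$ exactly when it lies in $A$. It therefore remains to count the newly dominated vertices, $N[A]\setminus N[S]$, and to show that this count equals
\[
|w'|_0+\bigl|\{x: w_x=2,\ w'_x=0\}\bigr|+\bigl|\{x: w'_x\ne2,\ w_x\ne0\}\bigr|+[w'_k=0].
\]

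First I record the local structure. The hypothesis $k\le y\le n-3$ guarantees three things for $1\le x\le k$:
\begin{itemize}
\item rows $y$, $y+1$ and $y+2$ and the vertex $(k+1,y+1)$ all exist;
\item the vertices $\rho_{y+1}^x$ lie strictly off the diagonal and off the side $y=n-1$;
\item the neighbours of $\rho_{y+1}^x$ are exactly $\rho_{y+1}^{x\pm1}$ (when they exist), $\rho_y^x$, $\rho_{y+2}^x$, and additionally $(k+1,y+1)$ when $x=k$.
\end{itemize}
Since every vertex of $S$ has second coordinate $\le y$, the key observation is that $\rho_{y+1}^x\in N[S]$ iff $\rho_y^x\in S$, i.e.\ iff $w_x=0$.

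For $(k+1,y+1)$, its neighbours are $(k,y+1)$, $(k+2,y+1)$, $(k+1,y)$ and $(k+1,y+2)$. None of these is in $S$: the row-$(y+1)$ and row-$(y+2)$ ones by the row condition, and $(k+1,y)$ by hypothesis. So $(k+1,y+1)\notin N[S]$.

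Now $N[A]$ splits into four disjoint parts, and I count the new ones in each:
\begin{itemize}
\item \emph{Row $y+2$.} The vertices $\rho_{y+2}^x$ with $x\in A$ all lie outside $N[S]$, contributing $|w'|_0$.
\item \emph{Row $y$.} The vertices $\rho_y^x$ with $x\in A$ are new exactly when $w_x=2$, contributing $|\{x:w_x=2,w'_x=0\}|$.
\item \emph{Row $y+1$.} A vertex $\rho_{y+1}^x$ is in $N[S']$ iff $w'_x\ne2$, and it was not in $N[S]$ iff $w_x\ne0$, giving the third term.
\item \emph{The vertex $(k+1,y+1)$.} It is new iff $\rho_{y+1}^k\in A$, i.e.\ iff $w'_k=0$.
\end{itemize}
Subtracting this total from $5|w'|_0$ yields $\Delta_s(w,w')$.

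Next I show that (Q1) and (Q2) fail.
\begin{itemize}
\item For (Q1): if $w_x=0$ then $\rho_y^x\in S\subseteq S'$ dominates $\rho_{y+1}^x$, so $w'_x\ne2$.
\item For (Q2): if $w'_x=1$, then $\rho_{y+1}^x$ has a neighbour in $S'$. That neighbour is not $\rho_{y+2}^x$ or $(k+1,y+1)$, since neither is in $S'$. Hence it must be $\rho_y^x$ (so $w_x=0$) or $\rho_{y+1}^{x\pm1}$ (so $w'_{x\pm1}=0$). The convention $w'_0=w'_{k+1}=2$ correctly encodes the missing or out-of-strip neighbours at $x=1$ and $x=k$.
\end{itemize}

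Finally, for (Q3) take $x\le k-1$. The closed neighbourhood of $\rho_y^x$ consists of $\rho_y^x$, $\rho_y^{x\pm1}$, $\rho_{y-1}^x$ and $\rho_{y+1}^x$, all of which stay inside the strip. Of these, only $\rho_{y+1}^x$ can belong to $S'\setminus S$. So $\rho_y^x$ is dominated by $S'$ iff $w_x\ne2$ or $w'_x=0$, which is precisely the negation of (Q3) at $x$.

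The main obstacle is boundary bookkeeping rather than any single idea. One has to check that $k\le y$ and $y\le n-3$ really keep every relevant neighbourhood generic: no diagonal vertices, no far side, and the corner row $y=k$ behaving correctly. One also has to check that the treatment of the column-$(k+1)$ vertices, via $(k+1,y)\notin S$ and the term $[w'_k=0]$, is consistent. I would verify these cases explicitly before writing up the counting.
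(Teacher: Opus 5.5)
Your proposal is correct and follows essentially the same route as the paper. You use the identity $\ell(S')-\ell(S)=5|A|-|N[S']\setminus N[S]|$ and count the newly dominated vertices in rows $y$, $y+1$, $y+2$ plus $(k+1,y+1)$, using $(k+1,y)\notin S$; you settle (Q1)--(Q3) by the same local neighbourhood arguments, and you simply make explicit the boundary checks that the paper leaves implicit.
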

\begin{proof}
We have $\ell(S')-\ell(S)=5|A|-|N[S']\setminus N[S]|$ and $|A|=|w'|_0$. The set $N[S']\setminus N[S]$
consists of: the vertices $\rho_y^x$ with $w_x=2$ and $w'_x=0$; the vertices $\rho_{y+1}^x$ with
$w'_x\ne2$ that have no neighbor in $S$, i.e.\ with $w_x\ne0$; the $|w'|_0$ vertices $\rho_{y+2}^x$ with
$w'_x=0$; and the vertex $(k+1,y+1)$ if $w'_k=0$. This gives the formula. Conditions \textup{(Q1)} and
\textup{(Q2)} only express that $w'$ is the word of $S'$: a label $2$ forbids the neighbor below to be
in $S$, and a label $1$ requires a dominator, which is $\rho_{y+1}^{x\pm1}$ or $\rho_y^x$. Finally, the
only neighbor of $\rho_y^x$ in $S'\setminus S$ is $\rho_{y+1}^x$.
\end{proof}

By symmetry, the same matrix serves for the top strip. For $k=10$, $T_s$ has $2\,923\,376$ finite
entries out of $8119^2$.

\begin{remark}\label{rem:gprt}
For $k=10$ the strip is the one of~\cite{goncalves}, and using \textup{(Q1)}--\textup{(Q3)}, $\Delta_s(w,w')$ can be rewritten as
$3|w'|_0-|w|_2-|w'|_1+|w|_0-[w'_k=0]+[w_k=2][w'_k\ne0]$. The formula printed in~\cite[Lemma 8]{goncalves}
is the same without the last term: the vertex $\rho_y^k$ is not interior and may remain undominated, in
which case it must not be counted. 
\end{remark}

\subsection{The diagonal strip}

Fix $e\ge1$. The \emph{column} $t$ of the diagonal strip is $\kappa_t=(\kappa_t^0,\dots,\kappa_t^e)$
with $\kappa_t^d=(t,t+d)$, and $o_t=(t,t+e+1)$ is the associated \emph{exterior vertex}
(Figure~\ref{fig:columns}). Away from the corners,
\[
N(\kappa_t^d)=\{\kappa_t^{d-1},\kappa_t^{d+1},\kappa_{t-1}^{d+1},\kappa_{t+1}^{d-1}\}\ \
(\text{indices in } [0,e]),\qquad\text{plus } o_{t-1},o_t \text{ if } d=e .
\]
Thus consecutive columns are joined by the edges $\kappa_t^d\kappa_{t+1}^{d-1}$, $1\le d\le e$, the
diagonal vertex $\kappa_t^0$ has degree $2$, and $o_t$ is adjacent to both $\kappa_t^e$ and
$\kappa_{t+1}^e$. Words of $W_d:=W_{e+1}$ are indexed by $d=0,\dots,e$.

\begin{figure}[ht]
\centering
\begin{tikzpicture}[scale=0.9,font=\scriptsize]
\foreach \t in {0,1,2}{ \foreach \d in {0,1,2}{ \draw[gray!70] (\t,\t+\d)--(\t,\t+\d+1); } }
\foreach \d in {1,2,3}{ \draw[line width=1.4pt] (0,\d)--(1,\d); }
\foreach \d in {1,2,3}{ \draw[gray!70] (1,1+\d)--(2,1+\d); \draw[gray!40] (-0.5,\d-1)--(0,\d-1);}
\draw[dashed] (0,3)--(0,4)--(1,4);
\draw[dashed] (1,4)--(1,5)--(2,5);
\draw[dashed] (-0.5,3)--(0,3);
\node[draw,fill=white,inner sep=2.2pt] at (0,4) {};
\node[draw,fill=white,inner sep=2.2pt] at (1,5) {};
\node[above left] at (0,4) {$o_t$};
\node[above left] at (1,5) {$o_{t+1}$};
\foreach \t in {0,1,2}{ \foreach \d in {0,1,2,3}{ \fill (\t,\t+\d) circle (2.2pt); } }
\foreach \d in {0,1,2,3}{ \node[left] at (-0.55,\d) {$\kappa_t^{\d}$}; }
\node[below right] at (1,1) {$\kappa_{t+1}^{0}$};
\node[below right] at (2,2) {$\kappa_{t+2}^{0}$};
\node[right] at (2.05,5) {$\kappa_{t+2}^{3}$};
\end{tikzpicture}
\caption{Columns $t$, $t+1$, $t+2$ of the diagonal strip for $e=3$. Thick edges:
$\kappa_t^d\kappa_{t+1}^{d-1}$. Squares: exterior vertices.}\label{fig:columns}
\end{figure}

\begin{definition}\label{def:Td}
For $w,w'\in W_d$ let
\begin{multline*}
\Delta_d(w,w')=5\,|w'|_0-\bigl|\{1\le d\le e: w_d=2,\ w'_{d-1}=0\}\bigr|
-\bigl|\{0\le d\le e-1: w'_d\ne2,\ w_{d+1}\ne0\}\bigr|\\
-[w'_e\ne2]-\bigl|\{1\le d\le e: w'_d=0\}\bigr|-[w'_e=0]\bigl(1+[w_e\ne0]\bigr),
\end{multline*}
and define the $|W_d|\times|W_d|$ matrix $T_d$ by
\[
T_d[w,w']=\left\{
\setlength{\arraycolsep}{2.5pt}\begin{array}{lll}
+\infty & \text{if } w'_0=2, & \textup{(P1)}\\[2pt]
+\infty & \text{if } \exists\,d,\ 0\le d\le e-1, \text{ s.t. } w'_d=2 \text{ and } w_{d+1}=0, & \textup{(P2)}\\[2pt]
+\infty & \text{if } \exists\,d,\ 0\le d\le e, \text{ s.t. } w'_d=1,\ w'_{d-1}\ne0,\ w'_{d+1}\ne0 \text{ and } w_{d+1}\ne0, & \textup{(P3)}\\[2pt]
+\infty & \text{if } \exists\,d,\ 1\le d\le e-1, \text{ s.t. } w_d=2 \text{ and } w'_{d-1}\ne0, & \textup{(P4)}\\[2pt]
\Delta_d(w,w') & \text{otherwise,}
\end{array}\right.
\]
with the convention $w'_{-1}=w'_{e+1}=w_{e+1}=2$.
\end{definition}

\begin{lemma}\label{lem:Td}
Let $1\le t\le n-e-3$, let $S\subseteq\{(x,y)\in V: y-x\le e,\ x\le t\}$, and let $S'=S\cup A$ with
$A\subseteq\kappa_{t+1}$. Put $w=\phi_S(\kappa_t)$ and $w'=\phi_{S'}(\kappa_{t+1})$. Then \textup{(P2)} and
\textup{(P3)} do not hold and
\[
\ell(S')-\ell(S)=\Delta_d(w,w').
\]
Moreover, $\kappa_{t+1}^0$ and the vertices $\kappa_t^d$ with $1\le d\le e-1$ are all dominated by $S'$
if and only if neither \textup{(P1)} nor \textup{(P4)} holds.
\end{lemma}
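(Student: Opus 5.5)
The plan is to argue exactly as in the proof of Lemma~\ref{lem:Ts}. By Proposition~\ref{prop:loss}(ii),
\[
\ell(S')-\ell(S)=5|A|-|N[S']\setminus N[S]|,\qquad |A|=|w'|_0 .
\]
So the formula reduces to counting the new closed-neighborhood vertices, and I would classify them by location.

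First I pin down the neighborhoods. Since $S$ lies in columns $\le t$ of the diagonal strip, the only vertices of $N[S]$ that can meet $N[A]$ are in $\kappa_t$, in $\kappa_{t+1}$, or among the exterior vertices $o_t,o_{t+1}$ and the vertices $(t+1,t+1+d)$ that sit ``outside'' column $t+1$. The hypothesis $t\le n-e-3$ keeps us away from the corner, so the neighborhoods listed before the statement apply. A vertex $a=\kappa_{t+1}^d\in A$ has closed neighborhood
\[
N[a]=\{\kappa_{t+1}^{d},\ \kappa_{t+1}^{d\pm1},\ \kappa_t^{d+1},\ \kappa_{t+2}^{d-1}\},
\]
together with $o_t,o_{t+1}$ when $d=e$. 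The counting then splits into four groups.
\begin{enumerate}[label=\textup{(\alph*)}]
\item The vertices $\kappa_t^{d}$, $1\le d\le e$, that become newly dominated. These are exactly the ones with $w_d=2$ and $w'_{d-1}=0$, giving the first subtracted term.
\item The vertices of $\kappa_{t+1}$ that are newly in $N[S']$. Such a vertex $\kappa_{t+1}^d$ has $w'_d\neq2$, and it was not already dominated by $S$. Its only neighbor in column $\le t$ is $\kappa_t^{d+1}$, so the condition is $w_{d+1}\ne0$. For $d=e$ there is no $\kappa_t^{e+1}$, but $o_t$ is not in $S$; this gives the separate term $[w'_e\ne2]$.
\item The vertices of $\kappa_{t+2}$ newly dominated, namely $\kappa_{t+2}^{d-1}$ for each $d\ge1$ with $w'_d=0$. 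These are new since $S$ does not reach column $t+2$, and they give the term $|\{1\le d\le e:w'_d=0\}|$. Here $\kappa_{t+1}^0$ has no forward neighbor, which is why $d=0$ is excluded.
\item The exterior vertices when $w'_e=0$. The vertex $o_{t+1}$ is always new. The vertex $o_t$ is new unless it is already dominated by $S$, which happens exactly when $\kappa_t^e\in S$, i.e.\ $w_e=0$. This gives $[w'_e=0](1+[w_e\neq0])$.
\end{enumerate}
Summing these yields $\Delta_d$.

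The step I expect to need the most care is (d) together with the boundary index $d=e$ in (b). There one must check that $o_t$ and $o_{t+1}$ are never double-counted with strip vertices, and that $o_t$'s neighbors in the strip are only $\kappa_t^e$ and $\kappa_{t+1}^e$. I would handle this by the convention $w_{e+1}=2$ and a direct case check.

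(P2) and (P3) express only that $w'$ is the true label word of $S'$:
\begin{itemize}
\item A label $w'_d=2$ forbids the column-$t$ neighbor $\kappa_t^{d+1}$ from lying in $S$, which is (P2).
\item A label $w'_d=1$ needs a dominator among $\kappa_{t+1}^{d\pm1}$ or $\kappa_t^{d+1}$, which is the negation of (P3). The dominator $o_t$ is impossible because $o_t\notin S$.
\end{itemize}
For the domination claim:
\begin{itemize}
\item $\kappa_{t+1}^0$ has neighbors only $\kappa_{t+1}^1$ and $\kappa_t^1$, so it is dominated iff $w'_0\ne2$, which is (P1).
\item For $1\le d\le e-1$, a vertex $\kappa_t^d$ with $w_d=2$ is undominated by $S$. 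Its only neighbor in $A$ is $\kappa_{t+1}^{d-1}$, so it is dominated by $S'$ iff $w'_{d-1}=0$, which gives (P4).
\end{itemize}
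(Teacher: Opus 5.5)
Your proposal is correct and follows essentially the same route as the paper. Both arguments write $\ell(S')-\ell(S)=5|A|-|N[S']\setminus N[S]|$, split the newly dominated vertices into those of $\kappa_t$, $\kappa_{t+1}$, $\kappa_{t+2}$ and the exterior vertices $o_t,o_{t+1}$, and use the same neighbourhood facts to handle (P1)--(P4). The one blemish is your garbled phrase about vertices $(t+1,t+1+d)$ ``outside column $t+1$''; it is harmless, since that list is only over-inclusive and $o_{t+1}$ is never in $N[S]$.
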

\begin{proof}
As in Lemma~\ref{lem:Ts}. The set $N[S']\setminus N[S]$ consists of: the $\kappa_t^d$ with $w_d=2$ and
$\kappa_{t+1}^{d-1}\in A$; the $\kappa_{t+1}^d$ with $w'_d\ne2$ and no neighbor in $S$, i.e.\ $d=e$ or
$w_{d+1}\ne0$; the $\kappa_{t+2}^{d-1}$ with $d\ge1$ and $w'_d=0$; the vertex $o_t$ if $w'_e=0$ and
$w_e\ne0$; and $o_{t+1}$ if $w'_e=0$. The two neighbors of $\kappa_{t+1}^0$ are $\kappa_t^1$ and
$\kappa_{t+1}^1$, and the only neighbor of $\kappa_t^d$ in $A$ is $\kappa_{t+1}^{d-1}$.
\end{proof}

\section{Decomposition of the band}\label{sec:glue}

The exact value of $b$ on a band requires coupling the three strips at the corners. We fix integers with
\begin{equation}\label{eq:hyp}
k\ge2,\qquad e\ge1,\qquad a\ge k+e+2,\qquad n\ge 2a+3, \tag{$\star$}
\end{equation}
write $B=B_n(k,e)$, and set $p=n-a-k-2$ and $m=n-2a-1$.

\begin{definition}\label{def:pieces}
The \emph{pieces} of $B$ are (Figure~\ref{fig:pieces})
\begin{align*}
\Pi_p&=\cV\cup\cL_p,\quad\text{where } \cV=B\cap\{y\le a\} \text{ and }\cL_p=\{x\le k,\ a<y\le a+p\};\\
\cR&=B\cap\{x\le k+1,\ y\ge n-k-1\};\\
\qquad \Pi'_p&=\sigma(\Pi_p);\\
\cD_m&=\{y-x\le e,\ y\ge a+1,\ x\le n-a-1\}.
\end{align*}
The \emph{slices} are 
\[
\alpha=(\alpha_x)_{x=1}^k, \beta=(\beta_x)_{x=1}^k, \delta=(\delta_d)_{d=0}^e, \varepsilon=(\varepsilon_d)_{d=0}^e
\]
and their images under $\sigma$, where
\[
\alpha_x=(x,n-k-2)\in\Pi_p,\quad \beta_x=(x,n-k-1)\in\cR,\quad
\delta_d=(a-d,a)\in\cV,\quad \varepsilon_d=(a+1-d,a+1)\in\cD_m .
\]
\end{definition}

Thus $\cV$ is the \emph{acute corner} at $(1,1)$, $\cR$ the \emph{right corner} at $(1,n-1)$ (a square of
side $k+1$ minus the vertex $c^*=(k+1,n-k-1)$), and $\cD_m$ consists of a fixed triangle and $m$ full
columns $\kappa_{a+1},\dots,\kappa_{a+m}$, the last one being $\sigma(\varepsilon)$. Both $\cR$ and
$\cD_m$ are invariant under $\sigma$.

\begin{figure}[ht]
\centering
\begin{tikzpicture}[scale=0.62,font=\small]
\fill[blue!14] (0,0)--(1.2,1.2)--(1.2,8.6)--(0,8.6)--cycle;
\fill[blue!14] (0,0)--(3.2,3.2)--(2.3,3.2)--(0,0.9)--cycle;
\fill[red!16] (0,8.6)--(1.2,8.6)--(1.2,8.8)--(1.4,8.8)--(1.4,10)--(0,10)--cycle;
\fill[green!16] (1.4,8.8)--(8.8,8.8)--(10,10)--(1.4,10)--cycle;
\fill[green!16] (6.8,6.8)--(10,10)--(9.1,10)--(6.8,7.7)--cycle;
\fill[orange!25] (3.2,3.2)--(6.8,6.8)--(6.8,7.7)--(2.3,3.2)--cycle;
\draw[thick] (0,0)--(0,10)--(10,10)--cycle;
\draw (1.2,2.1)--(1.2,8.8)--(7.9,8.8)--cycle;
\draw[very thick,dashed] (0,8.6)--(1.2,8.6);
\draw[very thick,dashed] (1.4,8.8)--(1.4,10);
\draw[very thick,dashed] (2.3,3.2)--(3.2,3.2);
\draw[very thick,dashed] (6.8,6.8)--(6.8,7.7);
\draw[dotted,thick] (0,3.2)--(1.2,3.2);
\draw[dotted,thick] (6.8,8.8)--(6.8,10);
\node at (0.6,1.9) {$\cV$};
\node at (0.6,6.0) {$\cL_p$};
\node at (0.62,9.4) {$\cR$};
\node at (4.1,9.4) {$\sigma(\cL_p)$};
\node at (8.2,9.4) {$\sigma(\cV)$};
\node at (5.75,4.75) {$\cD_m$};
\node[blue!60!black] at (-0.8,5.2) {$\Pi_p$};
\node[green!40!black] at (5.5,11.0) {$\Pi'_p$};
\node[below left] at (0,0) {\scriptsize$(1,1)$};
\node[above left] at (0,10) {\scriptsize$(1,n-1)$};
\node[above right] at (10,10) {\scriptsize$(n-1,n-1)$};
\node[left] at (0,8.6) {\scriptsize$\alpha\,|\,\beta$};
\node[left] at (0,3.2) {\scriptsize$y=a$};
\node (c3) at (4.6,2.5) {\scriptsize$\delta\,|\,\varepsilon$};
\draw[->,thin] (c3)--(2.85,3.12);
\end{tikzpicture}
\qquad
\begin{tikzpicture}[scale=0.8,font=\scriptsize]
\foreach \y in {-1,0,1,2}{
  \pgfmathtruncatemacro{\xa}{\y-3}
  \pgfmathtruncatemacro{\xb}{\y-1}
  \foreach \x in {\xa,...,\xb}{ \draw[gray!60] (\x,\y)--(\x+1,\y); }
}
\foreach \y in {-1,1}{
  \pgfmathtruncatemacro{\xa}{\y-2}
  \foreach \x in {\xa,...,\y}{ \draw[gray!60] (\x,\y)--(\x,\y+1); }
}
\foreach \x in {-2,-1,0}{ \draw[line width=1.6pt] (\x,0)--(\x,1); }
\draw[dashed] (-3,0)--(-3,1)--(-2,1);
\node[draw,fill=white,inner sep=2.2pt] at (-3,1) {};
\node[above left] at (-3,1) {$z$};
\foreach \y in {-1,0,1,2}{
  \pgfmathtruncatemacro{\xa}{\y-3}
  \foreach \x in {\xa,...,\y}{ \fill (\x,\y) circle (2.2pt); }
}
\foreach \dd/\x in {0/0,1/-1,2/-2,3/-3}{ \node[below right] at (\x,0) {$\delta_{\dd}$}; }
\foreach \dd/\x in {0/1,1/0,2/-1,3/-2}{ \node[above right] at (\x,1) {$\varepsilon_{\dd}$}; }
\draw[very thick,dashed,red!70!black] (-3.8,0.5)--(1.6,0.5);
\end{tikzpicture}
\caption{Left: the pieces $\Pi_p=\cV\cup\cL_p$, $\cR$, $\Pi'_p$, $\cD_m$ of the band and the four cuts
(not to scale). Right: the diagonal cut for $e=3$; the exterior vertex $z=(a-e,a+1)$ is adjacent to
$\delta_e$ and $\varepsilon_e$.}\label{fig:pieces}
\end{figure}

\begin{lemma}\label{lem:glue}
Let $B=P_1\sqcup\dots\sqcup P_t\subseteq V$ be such that every vertex of $I(n)$ belongs to $N[P_i]$ for at
most two indices $i$. Let $D\subseteq B$ and $D_i=D\cap P_i$. Then
\begin{enumerate}[label=\textup{(\alph*)},leftmargin=*]
\item $\ell(D)=\sum_i\ell(D_i)+\sum_{i<j}|N[D_i]\cap N[D_j]|$;
\item $D$ dominates $I(B)$ if and only if each $D_i$ dominates $I(P_i)$ and every
$v\in(I(B)\cap P_i)\setminus I(P_i)$ lies in $N[D_i]$ or has a neighbor in $D\setminus D_i$;
\item for $i\ne j$, $N[D_i]\cap N[D_j]$ is the disjoint union of the following sets
\[
\{a\in P_i\cap N[D_i]: N(a)\cap D_j\ne\emptyset\},
\] 
\[\{b\in P_j\cap N[D_j]: N(b)\cap D_i\ne\emptyset\},
\]
\[
\{z\notin B: N(z)\cap D_i\ne\emptyset\ne N(z)\cap D_j\}.
\]
\end{enumerate}
\end{lemma}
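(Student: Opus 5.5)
We prove the three parts in the order (c), (a), (b), since (a) is the inclusion--exclusion of Proposition~\ref{prop:loss}(ii) once the overlaps are controlled. The hypothesis that every vertex of $I(n)$ lies in $N[P_i]$ for at most two indices $i$ is what makes the inclusion--exclusion stop at pairwise terms. Since $D_i\subseteq P_i$ gives $N[D_i]\subseteq N[P_i]$, every vertex lies in $N[D_i]$ for at most two indices as well.

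For (a), we write $|N[D]|=|\bigcup_i N[D_i]|$. Each vertex lies in at most two of these sets, so
\[
|N[D]|=\sum_i|N[D_i]|-\sum_{i<j}|N[D_i]\cap N[D_j]|,
\]
because all triple and higher intersections are empty. The $D_i$ are disjoint, so $5|D|=\sum_i 5|D_i|$. Subtracting the two identities gives (a); this is Proposition~\ref{prop:loss}(ii) iterated.

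For (c), let $v\in N[D_i]\cap N[D_j]$ with $i\ne j$, and split according to where $v$ lies.
\begin{itemize}
\item If $v\in P_i$, then $v\in P_i\cap N[D_i]$. Moreover $v\notin D_j\subseteq P_j$ since the pieces are disjoint, so $v\in N[D_j]$ forces $N(v)\cap D_j\neq\emptyset$.
\item If $v\in P_j$, the symmetric argument applies.
\item If $v\notin B$, then $v\notin D_i\cup D_j$, so $v$ has neighbors in both $D_i$ and $D_j$.
\item If $v\in P_h$ with $h\ne i,j$, then $v\in N[P_h]$ as well, so $v$ lies in three sets $N[P_\cdot]$. This contradicts the hypothesis, so the case cannot occur.
\end{itemize}
Conversely, each listed element clearly lies in $N[D_i]\cap N[D_j]$. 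The three sets are pairwise disjoint because $P_i$, $P_j$ and $V\setminus B$ are pairwise disjoint.

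For (b), first note that $I(P_i)\subseteq I(B)$. Indeed, if $N[v]\subseteq P_i$, then $N[v]\subseteq B$.

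For the forward direction, suppose $D$ dominates $I(B)$ and take $v\in I(P_i)$. Then $N[v]\subseteq P_i$, so the dominator of $v$ in $D\cap N[v]$ lies in $P_i\cap D=D_i$. Hence $D_i$ dominates $I(P_i)$. Any $v\in (I(B)\cap P_i)\setminus I(P_i)$ is dominated by some $u\in D$. If $u\in D_i$ then $v\in N[D_i]$. Otherwise $u\in D\setminus D_i$, and $u\ne v$ because $v\in P_i$, so $u$ is a neighbor of $v$.

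For the converse, every $v\in I(B)$ lies in some $P_i$ since $I(B)\subseteq B$. Either $v\in I(P_i)$, where $D_i\subseteq D$ dominates it, or $v$ satisfies the stated alternative and is again dominated by $D$.

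Every step is elementary. The only point requiring care is the case analysis in (c): we must check that a vertex in a third piece $P_h$ cannot appear, and that a vertex of $P_i$ cannot itself belong to $D_j$. Both follow from disjointness and the at-most-two hypothesis.
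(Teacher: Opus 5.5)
Your proof is correct and follows the paper's argument. You use inclusion--exclusion truncated at pairwise terms for (a), the observation that $N[v]\subseteq P_i$ whenever $v\in I(P_i)$ for (b), and for (c) a case split on which piece contains the vertex, with the third-piece case excluded by the at-most-two hypothesis; you just write out the details that the paper's three-line proof leaves implicit.
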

\begin{proof}
(a) By hypothesis, the triple intersections of the sets $N[D_i]$ are empty, so inclusion--exclusion gives
$|N[D]|=\sum_i|N[D_i]|-\sum_{i<j}|N[D_i]\cap N[D_j]|$. (b) If $v\in I(P_i)$, then $N[v]\subseteq P_i$.
(c) A vertex of $N[D_i]\cap N[D_j]$ lies in no third piece; if it lies in $P_i$ it is not in $D_j$ and
hence has a neighbor in $D_j$, and similarly for $P_j$.
\end{proof}

\begin{lemma}\label{lem:structure}
Assume \eqref{eq:hyp}. Then:
\begin{enumerate}[label=\textup{(\arabic*)},leftmargin=*]
\item $B=\Pi_p\sqcup\cR\sqcup\Pi'_p\sqcup\cD_m$;
\item the edges between distinct pieces are $\alpha_x\beta_x$ $(1\le x\le k)$,
$\delta_d\varepsilon_{d+1}$ $(0\le d\le e-1)$ and their images under $\sigma$;
\item $N[\Pi_p]\cap N[\Pi'_p]=\emptyset=N[\cR]\cap N[\cD_m]$, so the hypothesis of Lemma~\ref{lem:glue}
holds;
\item the only vertices outside $B$ with neighbors in two pieces are $z=(a-e,a+1)$, adjacent to
$\delta_e$ and $\varepsilon_e$, and $\sigma(z)$;
\item among the vertices of the slices, those in $I(B)$ are exactly $\alpha_x,\beta_x$ with $x\le k-1$ and
$\delta_d,\varepsilon_d$ with $d\le e-1$ (and their images); each endpoint of a crossing edge has exactly
one neighbor outside its piece.
\end{enumerate}
\end{lemma}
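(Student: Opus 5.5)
The proof is an explicit coordinate check. I would first write every piece as a set of inequalities, then verify the five items in order, using only the hypothesis \eqref{eq:hyp} and the descriptions of $B=B_n(k,e)$ and $\sigma(x,y)=(n-y,n-x)$.

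\textbf{Step 1 (partition).} Note that $\sigma$ maps $\{x\le k\}$ onto $\{y\ge n-k\}$ and preserves $y-x$, so it maps $B$ to itself. Write $\Pi_p=\cV\cup\cL_p$ with $\cL_p=\{x\le k,\ a<y\le n-k-2\}$, because $a+p=n-k-2$. Take $\cR=\{x\le k+1,\ y\ge n-k-1\}\cap B$. Its only point with $x=k+1$ and $y=n-k-1$ would be $c^*$, and $c^*\notin B$ because $y-x=n-2k-2>e$ by \eqref{eq:hyp}. Apply $\sigma$ to get $\Pi'_p=\{y\ge n-k\}\cap\{x\le n-a\}$ on the top side, together with $\sigma(\cV)=B\cap\{x\ge n-a\}$. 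The diagonal band $\{y-x\le e\}$ then splits as $\cV$ for $y\le a$, $\cD_m$ for $y\ge a+1$ and $x\le n-a-1$, and $\sigma(\cV)$ for $x\ge n-a$. I would check that the left strip ($x\le k$) is covered by $\cV$ for $y\le a$, by $\cL_p$ for $y\le n-k-2$, and by $\cR$ above that, and symmetrically for the top strip. Disjointness follows from the strict inequalities in these descriptions. The inequality $a\ge k+e+2$ guarantees that $\cD_m$ meets neither straight strip: its points have $x\ge a+1-e>k$, and symmetrically $y<n-k$. The inequality $n\ge 2a+3$ makes $m\ge 2$, so $\cV$ and $\sigma(\cV)$ are disjoint and far apart.

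\textbf{Step 2 (crossing edges and common neighbourhoods).} For each pair of pieces, I would list the edges $(x,y)(x\pm1,y)$ and $(x,y)(x,y\pm1)$ whose endpoints lie in different pieces. Between $\Pi_p$ and $\cR$ the boundary is $y=n-k-2$ versus $y=n-k-1$ with $x\le k$, and the extra column $x=k+1$ of $\cR$ has no lower neighbour in $B$; this gives exactly the edges $\alpha_x\beta_x$. Between $\cV$ and $\cD_m$ the boundary is $y=a$ versus $y=a+1$, and the vertical edge $(x,a)(x,a+1)$ has both endpoints in the band exactly when $a-x\le e-1$; these are the edges $\delta_d\varepsilon_{d+1}$. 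Between $\cL_p$ and $\cD_m$ there are no edges, since the $x$-coordinates differ by at least $2$. All other pairs are then handled by $\sigma$. For item (3), closed neighbourhoods enlarge the $x$- and $y$-ranges by only $1$; $\Pi_p$ lies in $\{x\le a\}$ and $\Pi'_p$ in $\{y\ge n-a\}$, and $n\ge 2a+3$ keeps them separated. The same margin argument works for $\cR$ versus $\cD_m$, which are separated because $\cD_m$ has $y-x\le e$ while $\cR$ lives near $(1,n-1)$.

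\textbf{Step 3 (outside vertices and interior).} A vertex outside $B$ that has neighbours in two pieces must lie next to the boundary of the band, with $k<x$, $y<n-k$ and $y-x=e+1$. Its neighbours in $B$ are then diagonal vertices with $y-x=e$. Such a vertex sees two pieces only where $\cV$ meets $\cD_m$, namely at $z=(a-e,a+1)$, and at the symmetric place $\sigma(z)$. Near $\cR$ the vertex $c^*$ must be checked separately: its neighbours $(k,n-k-1)$ and $(k+1,n-k)$ both lie in $\cR$, so it does not count. For item (5), a slice vertex is interior exactly when all four neighbours lie in $B$. For $\alpha_x$ and $\beta_x$ this fails only for $x=k$, whose neighbour $(k+1,\cdot)$ is outside $B$. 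For $\delta_d$ and $\varepsilon_d$ it fails only for $d=e$, whose neighbour is $z$ or one of the vertices $(a-e,a)$, $(a+2-e,a+1)$ lying beyond the diagonal thickness. The claim about exactly one outside neighbour is then read off from the list of crossing edges.

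The main obstacle is bookkeeping near the ends of the slices, namely $\alpha_k$, $\beta_k$, $c^*$ and the vertices with $d=e$. This is where off-by-one errors hide, so I would verify those vertices individually with explicit coordinates.
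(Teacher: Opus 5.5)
Your plan is the same direct coordinate check as in the paper, but the argument you give for $N[\Pi_p]\cap N[\Pi'_p]=\emptyset$ in (3) fails. The half-planes $\{x\le a+1\}\supseteq N[\Pi_p]$ and $\{y\ge n-a-1\}\supseteq N[\Pi'_p]$ are not separated by $n\ge 2a+3$. They overlap in the whole upper-left corner, and both pieces have points there: for example $\alpha_1=(1,n-k-2)\in\Pi_p$ and $\sigma(\alpha_1)=(k+2,n-1)\in\Pi'_p$ both satisfy $x\le a$ and $y\ge n-a$, since $a\ge k+2$. The inequality $n\ge 2a+3$ separates $\{y\le a+1\}$ from $\{x\ge n-a-1\}$ (using $x\le y$), so it handles $\cV$ against $\sigma(\cV)$. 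But $\cL_p$ and $\sigma(\cL_p)$ both reach the right corner, where the two pieces come closest ($\alpha_k=(k,n-k-2)$ versus $\sigma(\alpha_k)=(k+2,n-k)$).

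To fix it, write $\Pi_p\subseteq\{x\le k,\ y\le n-k-2\}\cup\{y\le a\}$ and $\Pi'_p\subseteq\{x\ge k+2,\ y\ge n-k\}\cup\{x\ge n-a\}$. Three of the four cross intersections of closed neighbourhoods are empty because $n>a+k+2$ and $n\ge 2a+3$. The remaining one, between the two straight parts, lies in $\{x\le k+1,\ y\le n-k-1\}\cap\{x\ge k+1,\ y\ge n-k-1\}=\{c^*\}$. But $c^*\notin B$, and its neighbours in $B$, namely $\beta_k$ and $\sigma(\beta_k)$, lie in $\cR$, so $c^*$ belongs to neither neighbourhood. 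This is the paper's argument. Your item (2) depends on it as well: $\sigma$ does not handle the $\sigma$-invariant pairs $(\Pi_p,\Pi'_p)$ and $(\cR,\cD_m)$, so the absence of edges between them must come from a correct (3).

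There are also smaller slips.
\begin{itemize}
\item The top part of $\Pi'_p$ is $\{y\ge n-k,\ k+2\le x\le n-a-1\}$, not $\{y\ge n-k,\ x\le n-a\}$; the latter contains part of $\cR$.
\item In (4), the claim that such a vertex has $y-x=e+1$ needs a derivation. By (3), the two pieces include $\cR$ or $\cD_m$. Since $c^*$ is the only vertex outside $B$ adjacent to $\cR$, and it sees only $\cR$, the vertex must be adjacent to $\cD_m$.
\item In (5), the outside neighbours of $\delta_e$ and $\varepsilon_e$ other than $z$ are $(a-e-1,a)$ and $(a+1-e,a+2)$. The points you name, $(a-e,a)=\delta_e$ and $(a+2-e,a+1)=\varepsilon_{e-1}$, lie in $B$. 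The conclusion survives only because $z$ is adjacent to both.
\end{itemize}
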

\begin{proof}
A direct verification in coordinates. For (1): a vertex of $B$ with $x\le k$ lies in $\Pi_p$ or $\cR$
according to whether $y\le n-k-2$; one with $x\ge k+1$ and $y\ge n-k$ lies in $\cR$ if $x=k+1$ and in
$\Pi'_p$ otherwise; the remaining ones have $y-x\le e$ and lie in $\cV$, $\sigma(\cV)$ or $\cD_m$
according to whether $y\le a$, $x\ge n-a$ or neither. In $\cD_m$ we have $x\ge a+1-e\ge k+3$ and
$y\le n-k-3$, which gives disjointness and (2). For (3), $N[\Pi_p]$ and $N[\Pi'_p]$ can only meet at
$(k+1,n-k-1)$, whose neighbors in $B$ lie in $\cR$, and in $N[\cR]$ one has $y-x\ge n-2k-4>e+1$. For (4),
a vertex outside $B$ adjacent to $\cD_m$ and $\Pi_p$ has $x\ge k+2$, $y-x=e+1$ and $y\in\{a,a+1\}$, and
only $y=a+1$ is possible. For (5), the neighbors of $\alpha_k,\beta_k$ in column $k+1$ and those of
$\delta_e,\varepsilon_e$ on the diagonal $e+1$ are not in $B$.
\end{proof}

\begin{definition}\label{def:L}
For $w,w'\in W_s$ and $u,v\in W_d$ let
\begin{align*}
\lambda_s(w,w')&=\sum_{x=1}^{k}\bigl([w_x\ne2][w'_x=0]+[w'_x\ne2][w_x=0]\bigr),\\
\lambda_d(u,v)&=\sum_{d=0}^{e-1}\bigl([u_d\ne2][v_{d+1}=0]+[v_{d+1}\ne2][u_d=0]\bigr)+[u_e=0][v_e=0].
\end{align*}
The pair $(w,w')$ is \emph{compatible} if for no $x\le k-1$ we have ($w_x=2$, $w'_x\ne0$) or ($w'_x=2$,
$w_x\ne0$). The pair $(u,v)$ is \emph{compatible} if for no $d\le e-1$ we have ($u_d=2$,
$v_{d+1}\ne0$) and for no $d\le e-2$ we have ($v_{d+1}=2$, $u_d\ne0$). Let $L_s[w,w']=\lambda_s(w,w')$
and $L_d[u,v]=\lambda_d(u,v)$ for compatible pairs, and $+\infty$ otherwise.
\end{definition}

The matrix $L_s$ is the matrix $L$ of~\cite{goncalves}. The matrix $L_d$ is not symmetric (its first
index is the word on the side of the acute corner), and the last term of $\lambda_d$ accounts for the
exterior vertex $z$.

\begin{lemma}\label{lem:cuts}
Let $D\subseteq B$, $D_1=D\cap\Pi_p$, $D_2=D\cap\cR$, $D_4=D\cap\cD_m$, and let $w=\phi_{D_1}(\alpha)$,
$w'=\phi_{D_2}(\beta)$, $u=\phi_{D_1}(\delta)$, $v=\phi_{D_4}(\varepsilon)$. Then
\[
|N[D_1]\cap N[D_2]|=\lambda_s(w,w') \quad \text {and } \quad |N[D_1]\cap N[D_4]|=\lambda_d(u,v).
\]

Moreover, 
\begin{itemize}
\item the vertices of
$\alpha\cup\beta$ lying in $I(B)$ are all dominated by $D$ if and only if $(w,w')$ is compatible,
\item the
endpoints of crossing edges in $\delta\cup\varepsilon$ lying in $I(B)$ are all dominated by $D$ if and
only if $(u,v)$ is compatible. 
\end{itemize} 
The same holds for the images under $\sigma$.
\end{lemma}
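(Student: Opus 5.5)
The plan is to apply Lemma~\ref{lem:glue}(c) to the pairs $(\Pi_p,\cR)$ and $(\Pi_p,\cD_m)$, and to use Lemma~\ref{lem:structure} to identify which vertices can contribute to each of the three disjoint sets. For the straight cut, Lemma~\ref{lem:structure}(2) says the only edges between $\Pi_p$ and $\cR$ are $\alpha_x\beta_x$. Lemma~\ref{lem:structure}(4) says no vertex outside $B$ has neighbours in both $\Pi_p$ and $\cR$, so the third set is empty.

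\emph{First set.} A vertex $a\in\Pi_p\cap N[D_1]$ with a neighbour in $D_2$ must be some $\alpha_x$ with $\beta_x\in D_2$. The condition $\alpha_x\in N[D_1]$ is $w_x\ne2$, and $\beta_x\in D_2$ is $w'_x=0$. This yields the term $[w_x\ne2][w'_x=0]$.

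\emph{Second set.} Symmetrically, this yields $[w'_x\ne2][w_x=0]$.

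Summing over $x$ gives $\lambda_s(w,w')$. One point needs care: $\phi_{D_1}$ must be computed with respect to $D_1$ only. Since $\phi_{D_1}(\alpha_x)$ depends only on $D_1$, this is consistent.

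For the diagonal cut the crossing edges are $\delta_d\varepsilon_{d+1}$ for $0\le d\le e-1$. The same reasoning gives $[u_d\ne2][v_{d+1}=0]+[v_{d+1}\ne2][u_d=0]$. By Lemma~\ref{lem:structure}(4), the only exterior vertex with neighbours in both pieces is $z$, adjacent to $\delta_e\in\Pi_p$ and $\varepsilon_e\in\cD_m$. Hence $z\in N[D_1]\cap N[D_4]$ exactly when $u_e=0$ and $v_e=0$, which gives the last term of $\lambda_d$. I would also check that $z$ has no other neighbour in $\Pi_p$ or $\cD_m$ that could dominate it, so that the indicator is exactly right. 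This is a coordinate check: the neighbours of $z=(a-e,a+1)$ are $(a-e-1,a+1)$, $(a-e+1,a+1)=\varepsilon_e$, $(a-e,a)=\delta_e$ and $(a-e,a+2)$. The first and last have diagonal index $e+2$ and $e+2$ respectively, and $x>k$, $y<n-k$ by \eqref{eq:hyp}, so they are not in $B$.

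For the domination claims I would use Lemma~\ref{lem:glue}(b) together with Lemma~\ref{lem:structure}(5). Take an interior slice vertex, say $\alpha_x$ with $x\le k-1$. It is dominated by $D$ iff it is in $N[D_1]$ (that is, $w_x\ne2$) or its unique outside neighbour $\beta_x$ lies in $D$ (that is, $w'_x=0$). Failure is therefore exactly $w_x=2$ and $w'_x\ne0$, and symmetrically for $\beta_x$; this is the compatibility condition for $(w,w')$.

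For the diagonal cut, $\delta_d$ with $d\le e-1$ is interior, and its outside neighbour is $\varepsilon_{d+1}$. This gives the forbidden pattern $u_d=2$, $v_{d+1}\ne0$ for $d\le e-1$. For $\varepsilon_{d+1}$ the outside neighbour is $\delta_d$, and it is interior only when $d+1\le e-1$, that is, $d\le e-2$. This gives the pattern $v_{d+1}=2$, $u_d\ne0$ for $d\le e-2$. Vertices such as $\varepsilon_0$ and the non-interior endpoints impose no condition, matching Definition~\ref{def:L}.

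The statement for the images under $\sigma$ follows from Proposition~\ref{prop:loss}(iv) and the fact that $\sigma$ is an automorphism mapping the pieces and slices accordingly. The main obstacle I expect is the corner bookkeeping: confirming that the listed crossing edges and the single exterior vertex $z$ are exhaustive, and that the interior slice vertices are exactly those stated, near the ends of the slices ($x=k$, $d=e$, $d=0$). I would lean on Lemma~\ref{lem:structure}(2),(4),(5) for all of this rather than re-verify coordinates.
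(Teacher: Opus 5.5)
Your proposal is correct and takes essentially the same route as the paper. The paper also applies Lemma~\ref{lem:glue}(c) together with Lemma~\ref{lem:structure}(2),(4),(5): the crossing edges give the two mixed terms, the exterior vertex $z$ gives the last term of $\lambda_d$, and the unique outside neighbour of each interior slice vertex gives the compatibility conditions. Your coordinate checks of the neighbourhood of $z$ and of the end cases $\varepsilon_0$, $\delta_e$, $\varepsilon_e$ (and $\alpha_k,\beta_k$) simply fill in details that the paper leaves implicit.
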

\begin{proof}
By Lemma~\ref{lem:glue}(c) and Lemma~\ref{lem:structure}: $\alpha_x\in N[D_1]$ means $w_x\ne2$, and its
only neighbor in $\cR$ is $\beta_x$, which lies in $D_2$ if and only if $w'_x=0$; in the diagonal cut the
vertex $z$ contributes if and only if $\delta_e\in D_1$ and $\varepsilon_e\in D_4$. The second part
follows from Lemma~\ref{lem:structure}(5).
\end{proof}

For $u,v,v'\in W_d$ and $w,w'\in W_s$ define
\begin{align*}
C^{\cV}_p[u,w]&=\min\{\ell(D): D\subseteq\Pi_p,\ I(\Pi_p)\subseteq N[D],\ \phi_D(\delta)=u,\ \phi_D(\alpha)=w\},\\
C^{\cR}[w,w']&=\min\{\ell(D): D\subseteq\cR,\ I(\cR)\subseteq N[D],\ \phi_D(\beta)=w,\ \phi_D(\sigma(\beta))=w'\},\\
C^{\cD}_m[v,v']&=\min\{\ell(D): D\subseteq\cD_m,\ I(\cD_m)\subseteq N[D],\ \phi_D(\varepsilon)=v,\ \phi_D(\sigma(\varepsilon))=v'\},
\end{align*}
with $\min\emptyset=+\infty$. Under \eqref{eq:hyp} these matrices depend only on $(k,e,a,p)$, on $k$,
and on $(e,m)$, respectively; $C^{\cR}$ and $C^{\cD}_m$ are symmetric, and the matrix of $\Pi'_p$ is
$C^{\cV}_p$, by Proposition~\ref{prop:loss}(iv).

\begin{theorem}\label{thm:glue}
Assume \eqref{eq:hyp} and let
\[
H=L_s\ox C^{\cR}\ox L_s,\qquad A_p=C^{\cV}_p\ox H\ox(C^{\cV}_p)^{T},\qquad
G_m=L_d\ox C^{\cD}_m\ox L_d^{T}.
\]
Then
\[
b(B_n(k,e))=\min_{u,u'\in W_d}\bigl(A_p[u,u']+G_m[u,u']\bigr).
\]
\end{theorem}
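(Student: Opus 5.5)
We decompose an arbitrary $D\subseteq B=B_n(k,e)$ along the four pieces of Lemma~\ref{lem:structure}(1), namely $D_1=D\cap\Pi_p$, $D_2=D\cap\cR$, $D_3=D\cap\Pi'_p$ and $D_4=D\cap\cD_m$. We then show that the minimum of $\ell(D)$ over all $D$ dominating $I(B)$ equals the stated $(\min,+)$ expression, proving both inequalities. By Lemma~\ref{lem:structure}(3) the hypothesis of Lemma~\ref{lem:glue} holds. Also, $N[D_1]\cap N[D_3]=\emptyset=N[D_2]\cap N[D_4]$. Lemma~\ref{lem:glue}(a) therefore gives
\[
\ell(D)=\sum_{i=1}^4\ell(D_i)+|N[D_1]\cap N[D_2]|+|N[D_2]\cap N[D_3]|+|N[D_1]\cap N[D_4]|+|N[D_3]\cap N[D_4]|.
\]
Lemma~\ref{lem:cuts} and its $\sigma$-image rewrite these four interaction terms as $\lambda_s(w_1,w_2)$, $\lambda_s(\sigma w_3,\sigma w_2)$, $\lambda_d(u,v)$ and $\lambda_d(u',v')$. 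Here $w_1=\phi_{D_1}(\alpha)$, $w_2=\phi_{D_2}(\beta)$, $u=\phi_{D_1}(\delta)$, $v=\phi_{D_4}(\varepsilon)$, and the primed and $\sigma$-quantities are the mirror slices, read through $\sigma$ via Proposition~\ref{prop:loss}(iv).

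Next we handle domination. By Lemma~\ref{lem:glue}(b), $D$ dominates $I(B)$ if and only if each $D_i$ dominates $I(P_i)$ and every vertex of $(I(B)\cap P_i)\setminus I(P_i)$ is dominated. We must show that these boundary vertices are exactly the interior slice vertices listed in Lemma~\ref{lem:structure}(5), namely $\alpha_x,\beta_x$ for $x\le k-1$ and $\delta_d,\varepsilon_d$ for $d\le e-1$, together with their images. A vertex of $P_i$ that is interior in $B$ but not in $P_i$ has a neighbor in another piece, so by Lemma~\ref{lem:structure}(2) it is an endpoint of a crossing edge, and hence a slice vertex. Lemma~\ref{lem:cuts} then turns the domination of these vertices into compatibility of the four slice pairs, which is exactly what the $+\infty$ entries of $L_s$ and $L_d$ encode. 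One point needs care: the endpoint $\varepsilon_e$ need not be considered. It is not in $I(B)$, since its neighbor $z$ lies outside $B$; this is why compatibility for $v_{d+1}=2$ only ranges over $d\le e-2$.

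For the lower bound, we take an optimal $D$. Each $\ell(D_i)$ is at least the corresponding entry of $C^{\cV}_p$, $C^{\cR}$, $C^{\cV}_p$ (via $\sigma$) or $C^{\cD}_m$, evaluated at the realized slice words. Summing and minimizing over the intermediate words gives $\ell(D)\ge A_p[u,u']+G_m[u,u']$ for the realized $u,u'$. For the upper bound, we fix $u,u'$ and intermediate words achieving the minimum. We choose, for each piece, a minimizer $D_i$ of the corresponding $C$-entry with those slice words; for $\Pi'_p$ we take $\sigma$ of a minimizer for $C^{\cV}_p$. Their union $D$ dominates $I(B)$ by the compatibility criterion above, and by the displayed identity $\ell(D)$ equals the sum. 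This works because every quantity involved depends only on the slice words, and the slices are disjoint sets determined within each piece.

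The main obstacle is bookkeeping rather than ideas. We must confirm that the factors in the $(\min,+)$ products are indexed consistently: $L_d$ has rows indexed by $\delta$-words and columns by $\varepsilon$-words, $G_m$ uses $L_d^{T}$ on the right, and $H$ must pass through $C^{\cR}[\beta\text{-word},\sigma(\beta)\text{-word}]$ with $L_s$ on both sides. We must also confirm that $C^{\cV}_p$ simultaneously records $\delta$ and $\alpha$ words, so that $A_p[u,u']$ couples both acute-corner diagonal words. We will check carefully that $\sigma$ maps $\alpha,\delta$ of $\Pi_p$ to the corresponding slices of $\Pi'_p$ with the same word indexing, so that the last factor is indeed $(C^{\cV}_p)^T$.
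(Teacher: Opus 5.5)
Your proposal is correct and follows essentially the same route as the paper: split $D$ along the four pieces, use Lemma~\ref{lem:glue}(a) with Lemma~\ref{lem:cuts} to write $\ell(D)$ as the four piece losses plus the four gluing costs, and use Lemma~\ref{lem:glue}(b) with Lemma~\ref{lem:structure}(5) to turn domination of $I(B)$ into interior domination in each piece plus compatibility at the cuts. For the converse you glue together minimizers of the matrix entries, as the paper does, and your extra checks (that the boundary vertices are exactly the interior slice vertices, that $\varepsilon_e\notin I(B)$, and the indexing of $L_d^{T}$ and $(C^{\cV}_p)^{T}$) only make explicit what the paper's short proof leaves implicit.
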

\begin{proof}
First we prove the side ``$\ge$''. Let $D\subseteq B$ dominate $I(B)$ with $\ell(D)=b(B)$, and let $D_1$, $D_2$, $D_3$, $D_4$ be its traces on
$\Pi_p$, $\cR$, $\Pi'_p$, $\cD_m$. By Lemmas~\ref{lem:glue}(b), \ref{lem:structure}(5) and~\ref{lem:cuts}, each
$D_i$ dominates the interior of its piece and the four pairs of words read at the cuts are compatible.
By Lemmas~\ref{lem:glue}(a) and~\ref{lem:cuts}, $\ell(D)$ is the sum of the four losses $\ell(D_i)$ and of
the four gluing costs, and each $\ell(D_i)$ is at least the corresponding matrix entry. Conversely, given words for which the sum is finite, choose sets $D_i$ attaining the four
minima; by the same lemmas their union dominates $I(B)$ and has loss equal to the sum.
\end{proof}

Only the inequality ``$\ge$'' (the analogue of~\cite[Lemma 7]{goncalves}) is needed for
Theorem~\ref{thm:lower}. The dependence on $n$ is as follows.

\begin{lemma}\label{lem:transfer}
For every $p\ge0$ let $C^{\cV}_p$ be the matrix defined as above for the set $\Pi_p=\cV\cup\cL_p$, with
$\alpha$ read on the row $a+p$, in any triangle $I(N)$ with $N\ge a+p+k+2$ (it does not depend on $N$);
similarly, let $C^{\cD}_m$ be defined for every $m\ge2$. Then
$C^{\cV}_{p+1}=C^{\cV}_p\ox T_s$ for $p\ge0$, and $C^{\cD}_{m+1}=C^{\cD}_m\ox T_d$ for $m\ge2$.
\end{lemma}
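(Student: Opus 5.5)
We prove both identities the same way. Split the relevant piece into the old piece plus one new layer, decompose a candidate set $D$ accordingly, and apply Lemma~\ref{lem:Ts} (for the straight strip) or Lemma~\ref{lem:Td} (for the diagonal strip) to the step that adds the layer. Each identity then needs two inequalities: every admissible set for the larger piece restricts to an admissible set for the smaller one, and conversely every admissible extension is counted by the matrix entry.

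\textbf{First identity.} Here $\Pi_{p+1}=\Pi_p\sqcup\rho_{a+p+1}$, where the new row consists of the $k$ vertices $(x,a+p+1)$ with $x\le k$.

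For the inequality ``$\ge$'', take $D'\subseteq\Pi_{p+1}$ realizing $C^{\cV}_{p+1}[u,w']$ and set $D=D'\cap\Pi_p$ and $A=D'\cap\rho_{a+p+1}$. We then check each hypothesis of Lemma~\ref{lem:Ts}, applied with $y=a+p$.
\begin{itemize}
\item[(i)] $D$ lies in rows $\le a+p$.
\item[(ii)] $(k+1,a+p)\notin D$, because it is not in $\Pi_p$. Since $a+p\ge a>k+e+1$, that vertex lies outside the band.
\item[(iii)] $k\le a+p\le N-3$, which follows from $N\ge a+p+k+2$.
\end{itemize}
Set $w=\phi_D(\rho_{a+p})$.

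Next we check that $D$ is admissible for $\Pi_p$. The interior $I(\Pi_{p+1})$ contains $I(\Pi_p)$ together with the vertices $\rho_{a+p}^x$, $x\le k-1$, which in $\Pi_p$ are boundary vertices. The vertices of $I(\Pi_p)$ have no neighbour in the new row, so $D$ dominates $I(\Pi_p)$. The vertices $\rho_{a+p}^x$, $x\le k-1$, are dominated by $D'$, so by the last assertion of Lemma~\ref{lem:Ts} condition (Q3) fails. Conditions (Q1) and (Q2) fail automatically. Hence $T_s[w,w']=\Delta_s(w,w')=\ell(D')-\ell(D)$.

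We also need that $\phi_D(\delta)=\phi_{D'}(\delta)=u$. This holds because the slice $\delta$ lies in row $a$, whose closed neighbourhood meets the new row only when $p=0$. In that case row $a+1$ is adjacent only to $\delta_x$ with $x\le k$, and this must be handled. The cleanest fix is to check that $a-d\ge a-e>k$, so no $\delta_d$ is in columns $\le k$; this holds since $a\ge k+e+2$. Therefore $\ell(D')\ge C^{\cV}_p[u,w]+T_s[w,w']$.

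For the inequality ``$\le$'', take $D$ attaining $C^{\cV}_p[u,w]$ and a finite entry $T_s[w,w']$. Choose $A=\{\rho_{a+p+1}^x: w'_x=0\}$. The conditions (Q1) and (Q2) being false say exactly that $\phi_{D\cup A}(\rho_{a+p+1})=w'$, since every label-1 vertex gets a dominator and no label-2 vertex has a neighbour in $D\cup A$. Condition (Q3) being false gives domination of the newly interior vertices. Lemma~\ref{lem:Ts} then gives the loss.

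\textbf{Second identity.} The same argument applies with Lemma~\ref{lem:Td}: $\cD_{m+1}$ is $\cD_m$ plus the column $\kappa_{a+m+1}$. Reading $v$ at $\varepsilon$ is unaffected because $m\ge2$ keeps $\varepsilon$ away from the new column. The newly interior vertices are $\kappa^0_{a+m+1}$ together with $\kappa^d_{a+m}$ for $1\le d\le e-1$, and these are exactly the vertices controlled by (P1) and (P4).

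\textbf{Main obstacle.} The step that needs the most care is verifying the hypothesis of Lemma~\ref{lem:Td} that the old set lies in $\{y-x\le e,\ x\le t\}$. The triangle part of $\cD_m$ at $y\ge a+1$ does satisfy $x\le t=a+m$. We also need that the exterior vertices $o_t$ are outside the piece, and that the old column word, read in $\cD_m$, equals the word read within the larger set. Both hold because only the new column and the vertex $o_{t+1}$ are affected.
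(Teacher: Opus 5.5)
Your proof is correct and follows the paper's argument in more detail. For ``$\ge$'' you restrict an optimal set for the larger piece and apply Lemma~\ref{lem:Ts} (resp.\ Lemma~\ref{lem:Td}); for ``$\le$'' you extend by $A=\{\rho^x_{a+p+1}: w'_x=0\}$ (resp.\ the analogous subset of $\kappa_{a+m+1}$), using that the newly interior vertices are exactly those governed by (Q3), resp.\ (P1) and (P4).

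One harmless slip in your last paragraph: adding $A$ does change labels outside the new column (a $2$ on $\kappa_{a+m}$ or on $o_{a+m}$ can become $1$). The argument never needs $\phi_D(\kappa_{a+m})=\phi_{D'}(\kappa_{a+m})$, because both $C^{\cD}_m$ and Lemma~\ref{lem:Td} index by the word of the restricted set $D$.
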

\begin{proof}
This is the argument of~\cite[Lemma 8]{goncalves}. If $S'\subseteq\Pi_{p+1}$ dominates $I(\Pi_{p+1})$
and $S=S'\cap\Pi_p$, then $S$ dominates $I(\Pi_p)$, the word on $\delta$ is unchanged, and by
Lemma~\ref{lem:Ts} the increment $\ell(S')-\ell(S)$ is $T_s[w,w']<\infty$, because
$I(\Pi_{p+1})=I(\Pi_p)\cup\{\rho^x_{a+p}:x\le k-1\}$. Conversely, every set counted in
$C^{\cV}_p[u,w]$ extends, for each $w'$ with $T_s[w,w']<\infty$, to a set counted in
$C^{\cV}_{p+1}[u,w']$. The diagonal case is identical, using Lemma~\ref{lem:Td} and
$I(\cD_{m+1})=I(\cD_m)\cup\{\kappa_{a+m}^d:1\le d\le e-1\}\cup\{\kappa_{a+m+1}^0\}$.
\end{proof}

\section{Proof of Theorem~\ref{thm:periodic}}\label{sec:periodic}

Throughout this section $(k,e,a)=(10,6,18)$, so that $p=n-30$ and $m=n-37$.

\subsection{Periodicity}

\begin{proposition}[computer-assisted]\label{prop:perD}
For $e=6$, $C^{\cD}_{m+5}=C^{\cD}_m+5$ for every $m\ge10$. More generally, for $1\le e\le7$ there are
$c,g,m_0$ such that $C^{\cD}_{m+c}=C^{\cD}_m+g$ for all $m\ge m_0$:
\begin{center}
\begin{tabular}{c|ccccccc}
$e$ & $1$ & $2$ & $3$ & $4$ & $5$ & $6$ & $7$\\\hline
$(c,g)$ & $(2,0)$ & $(3,2)$ & $(1,1)$ & $(1,1)$ & $(2,2)$ & $(5,5)$ & $(5,5)$\\
$m_0$ & $4$ & $5$ & $7$ & $6$ & $8$ & $10$ & $14$
\end{tabular}
\end{center}
\end{proposition}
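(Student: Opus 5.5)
The plan is to reduce this computer-assisted statement to a finite computation, using Lemma~\ref{lem:transfer}. Since $C^{\cD}_{m+1}=C^{\cD}_m\ox T_d$ for $m\ge2$, the whole sequence $(C^{\cD}_m)_{m\ge2}$ is the orbit of one matrix under right $(\min,+)$-multiplication by the fixed matrix $T_d$. The key observation is the usual one for $(\min,+)$ transfer matrices. If for some $m_0$ we verify
\[
C^{\cD}_{m_0+c}=C^{\cD}_{m_0}+g
\]
entrywise (with $+\infty+g=+\infty$), then induction gives the identity for all $m\ge m_0$. The induction step uses only that adding a scalar commutes with $\ox$:
\[
C^{\cD}_{m+1+c}=C^{\cD}_{m+c}\ox T_d=(C^{\cD}_m+g)\ox T_d=(C^{\cD}_m\ox T_d)+g=C^{\cD}_{m+1}+g.
\]
So for each $e\in\{1,\dots,7\}$ it suffices to check a single matrix identity at $m=m_0$ from the table.

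The concrete steps are as follows.

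\textbf{Step 1: enumerate the words.} For each $e$, we enumerate $W_d=W_{e+1}$. Its size is given by the formula for $|W_t|$, so for $e=6$ it is $|W_7|=577$ words, and at most $|W_8|=1393$ for $e=7$.

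\textbf{Step 2: build $T_d$.} We construct $T_d$ directly from Definition~\ref{def:Td}, applying conditions (P1)--(P4) and the formula $\Delta_d$.

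\textbf{Step 3: compute the base matrix.} We compute the base matrix $C^{\cD}_2$ by brute force from its definition. Under \eqref{eq:hyp} with $a=18$, the piece $\cD_2$ is a fixed finite set: the triangle together with two columns. We enumerate the subsets $D\subseteq\cD_2$ that dominate $I(\cD_2)$, read off $\phi_D(\varepsilon)$ and $\phi_D(\sigma(\varepsilon))$, and record the minimum loss.

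This brute force is cheap only if it is organized as a transfer itself. We would grow $\cD_2$ column by column from the acute side, keeping as state the labels on the current frontier. A safer alternative is to recompute $C^{\cD}_2$ independently by exhaustive search on a small instance and cross-check the two results.

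\textbf{Step 4: iterate.} We iterate $C^{\cD}_{m+1}=C^{\cD}_m\ox T_d$ until we find the first $m_0$ with $C^{\cD}_{m_0+c}-C^{\cD}_{m_0}$ constant on finite entries and equal $+\infty$ patterns. This confirms the table values $(c,g,m_0)$, in particular $(5,5,10)$ for $e=6$.

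The computation must use exact integer arithmetic, with $+\infty$ handled as a sentinel. For $e=6$ each product costs about $577^3\approx 2\cdot10^8$ operations, which is trivial. Sparsity of $T_d$ helps for $e=7$.

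The main obstacle is not the periodicity argument, which is the two-line induction above. It is making sure the base matrix $C^{\cD}_2$ and the entries of $T_d$ are computed correctly, since any off-by-one error in the corner conventions silently changes the constants. These include $w'_{-1}=w'_{e+1}=w_{e+1}=2$, the exterior vertices $o_t$, and the degree-2 diagonal vertices.

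To guard against this, I would run an independent sanity check. For small $n$ we compute $b(B_n(10,6))$ both via Theorem~\ref{thm:glue} and via a direct exhaustive/ILP computation on the band, and compare. For $m$ beyond the range needed, we also verify that $g/c$ matches the expected asymptotic density of loss per diagonal column, namely $1$ per column for $e=6$. This is consistent with the $3n$ growth in Theorem~\ref{thm:periodic}, split across the two straight strips and the diagonal.
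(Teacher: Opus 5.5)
Your proposal is correct and follows the paper's own argument. As in the paper, you compute $C^{\cD}_2$ column by column (the paper adapts the counting of Lemma~\ref{lem:Td} to the truncated columns of the initial triangle), iterate $C^{\cD}_{m+1}=C^{\cD}_m\ox T_d$, check the identity entrywise at $m=m_0$, and propagate it using $(X+g)\ox T_d=(X\ox T_d)+g$ together with Lemma~\ref{lem:transfer}. One minor remark: for $e=7$, \eqref{eq:hyp} formally requires $a\ge 19$, but $C^{\cD}_m$ depends only on $(e,m)$, so fixing $a=18$ is harmless.
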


\begin{proposition}[computer-assisted]\label{prop:perS}
For $k=10$, $T_s^{\ox(q+1)}=T_s^{\ox q}+1$ for every $q\ge119$. Consequently, for any $e$, $a$ and any
choice of the acute corner, $C^{\cV}_{p+1}=C^{\cV}_p+1$ for every $p\ge119$. For the corner $\cV$ with
$(k,e,a)=(10,6,18)$ one has, more precisely, $C^{\cV}_{p+1}=C^{\cV}_p+1$ for every $p\ge105$.
\end{proposition}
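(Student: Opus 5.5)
The statement to prove is Proposition~\ref{prop:perS}. Three claims need to be established: the periodicity $T_s^{\ox(q+1)}=T_s^{\ox q}+1$ for $q\ge119$, its consequence for a general acute corner, and the sharper threshold $p\ge105$ for the specific corner with $(k,e,a)=(10,6,18)$.

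\textbf{First claim.} I would follow \cite[Fact 9]{goncalves} and compute $(\min,+)$ powers of $T_s$ for $k=10$. The matrix is $8119\times 8119$ with $2\,923\,376$ finite entries. I store it sparsely and form the powers $P_q=T_s^{\ox q}$ iteratively via $P_{q+1}=P_q\ox T_s$. To keep memory manageable, each row of $P_q$ can be computed independently: start from the unit row vector of a word $w$ and multiply by $T_s$ on the right $q$ times. Each row then costs about $3\cdot 10^6$ operations per step, which is feasible for all 8119 rows over roughly 120 steps.

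At each step I compare $P_{q+1}$ with $P_q+1$ entrywise, where $+\infty$ is left fixed. The comparison must also check that the finite support is unchanged. The point is that once $P_{q_0+1}=P_{q_0}+1$ holds for a single $q_0$, induction gives the identity for all $q\ge q_0$:
\[
P_{q+2}=P_{q+1}\ox T_s=(P_q+1)\ox T_s=P_{q+1}+1 .
\]
So it suffices to find the first $q_0$ at which equality occurs and to confirm that $q_0\le119$.

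\textbf{Consequence for a general corner.} By Lemma~\ref{lem:transfer}, $C^{\cV}_p=C^{\cV}_0\ox T_s^{\ox p}$. Hence for $p\ge119$,
\[
C^{\cV}_{p+1}=C^{\cV}_0\ox(T_s^{\ox p}+1)=C^{\cV}_p+1 .
\]
This uses that adding a constant commutes with $\ox$, and it holds regardless of $C^{\cV}_0$. It therefore covers any $e$, $a$ and any acute corner.

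\textbf{Sharper threshold for $(k,e,a)=(10,6,18)$.} I would compute $C^{\cV}_0$ explicitly. The natural tool is a second transfer computation sweeping rows $y=1,\dots,a$ through the region $\cV$, whose rows have varying shape because of the diagonal band near the corner. Throughout the sweep I keep track of the word on $\delta$ as part of the state. The $\delta$ word lies on row $a$, and reading $\alpha$ at row $a$ for $p=0$ makes both words live on the same row. The result is a $577\times 8119$ matrix. It can be checked independently by brute force only on small subsets, so I would cross-check it against the exact values of $\gamma$ for small $n$, e.g.\ $n\le 25$ from \cite{ALR}.

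With $C^{\cV}_0$ in hand, I iterate $C^{\cV}_{p+1}=C^{\cV}_p\ox T_s$, which involves only 577 rows and is cheap. I then test $C^{\cV}_{p+1}=C^{\cV}_p+1$. As before, the first equality propagates by the same induction, since $C^{\cV}_{p+1}=C^{\cV}_p\ox T_s$. It remains to verify that this first equality occurs at some $p\le105$.

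The main obstacle is computational: storing dense powers of $T_s$ requires about $6.6\cdot10^7$ entries per power, and the row-by-row scheme avoids keeping many powers at once. Beyond that, the work must be made trustworthy. The constructions of $T_s$ (including the extra term noted in Remark~\ref{rem:gprt}) and of $C^{\cV}_0$ have to be validated against brute-force enumeration on small cases before the periodicity outputs can be relied on.
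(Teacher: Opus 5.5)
Your proposal is correct and follows the paper's argument: the identity is checked entrywise at the first index and then propagates because $(X+g)\ox T=(X\ox T)+g$, and $C^{\cV}_p=C^{\cV}_0\ox T_s^{\ox p}$ (Lemma~\ref{lem:transfer}) gives the consequence for a general corner. The only difference is an implementation choice: the paper computes $C^{\cV}_0$ with a cell-by-cell dynamic program rather than a row sweep, processing the $170$ cells of $\cV$ by increasing $x+y$, and it includes the exterior vertex $(11,18)$, which is adjacent to three cells of $\cV$, as a virtual cell that cannot be added to the set.
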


In both propositions the equality is checked entrywise for the first value of the index and then
propagates, because $(X+g)\ox T=(X\ox T)+g$ and by Lemma~\ref{lem:transfer}. The matrix $C^{\cD}_2$ is
computed column by column with the counting of Lemma~\ref{lem:Td} adapted to the truncated columns of
the initial triangle.

\subsection{The corners and the extra vertex}

The matrices $C^{\cV}_0$ and $H$ are computed by a cell-by-cell dynamic program. The state is a function
on labelings in $\{0,1,2\}$ of the current \emph{frontier} (processed cells having an unprocessed
neighbor, together with the cells of the output slices), with the meaning of $\phi$. When a cell $c$
whose processed neighbors have labels $s_1,\dots,s_j$ is added to the set, the cost is
$5-1-\mathrm{ext}(c)-|\{i:s_i=2\}|$, where $\mathrm{ext}(c)$ is the number of vertices outside the piece
whose only neighbor in the piece is $c$, and the labels $2$ become $1$; when it is not added, it
receives the label $1$ at cost $-1$ if some $s_i=0$, and the label $2$ at cost $0$ otherwise. In this way
every vertex of $N[S]$ is counted exactly once. A cell is closed when all its neighbors are processed,
and a closed interior cell must have label $0$ or $1$. A vertex outside the piece with two or more
neighbors in the piece is processed as a \emph{virtual} cell that cannot be added to the set.

For the acute corner $\cV$ ($170$ vertices and one virtual cell) the cells are processed by increasing
$x+y$, and the frontier never exceeds $17$ cells; $C^{\cV}_0$ is a $577\times8119$ matrix with
$3\,077\,101$ finite entries. For the right corner, the same program is run on $\Pi_p\cup\cR$ starting
from the row $\alpha$, the cells of $\cR$ being processed in order of increasing slope around the inner
corner (frontier at most $12$); applied to the identity matrix this yields $H$.

\medskip
Without modification this band falls short by one when $n\equiv0\pmod5$ (compare the rows $b(B_n(10,6))$ and $\nu(n)$ of
Table~\ref{tab:periodic}). As in~\cite{goncalves}, the remedy is an extra vertex at the right
corner. Let $c^*=(k+1,n-k-1)$, $B'=B\cup\{c^*\}$ and $\cR'=\cR\cup\{c^*\}$, and let
$z_1=(k+1,n-k-2)$ and $z_2=(k+2,n-k-1)$ be the two neighbors of $c^*$ outside $B'$
(Figure~\ref{fig:extra}).

\begin{figure}[ht]
\centering
\begin{tikzpicture}[scale=0.95,font=\scriptsize]
\foreach \Y in {-5,-4,-3}{ \draw[gray!60] (1,\Y)--(3,\Y); }
\foreach \Y in {-2,-1,0}{ \draw[gray!60] (1,\Y)--(6.5,\Y); }
\foreach \x in {1,2,3}{ \draw[gray!60] (\x,-5.5)--(\x,0); }
\foreach \x in {4,5,6}{ \draw[gray!60] (\x,-2)--(\x,0); }
\draw[gray!60] (3,-3)--(4,-3)--(4,-2);
\draw[very thick,dashed,red!70!black] (0.5,-3.5)--(3.5,-3.5);
\draw[very thick,dashed,red!70!black] (4.5,-2.5)--(4.5,0.5);
\draw[dashed] (3,-4)--(4,-4)--(4,-3);
\draw[dashed] (4,-3)--(5,-3)--(5,-2);
\node[draw,fill=white,inner sep=2.4pt] at (4,-4) {};
\node[draw,fill=white,inner sep=2.4pt] at (5,-3) {};
\foreach \Y in {-5,-4,-3}{ \foreach \x in {1,2,3}{ \fill (\x,\Y) circle (2.2pt); } }
\foreach \Y in {-2,-1,0}{ \foreach \x in {1,2,3,4,5,6}{ \fill (\x,\Y) circle (2.2pt); } }
\fill[red] (4,-3) circle (3pt);
\node[below right] at (4,-3) {$c^*$};
\node[left] at (1,-4) {$\alpha$};
\node[left] at (1,-3) {$\beta$};
\node[below left] at (3,-4) {$\alpha_k$};
\node[above left] at (3,-3) {$\beta_k$};
\node[above left] at (4,-2) {$\sigma(\beta_k)$};
\node[above right] at (5,-2) {$\sigma(\alpha_k)$};
\node[right] at (4.15,-4.1) {$(k+1,n-k-2)$};
\node[right] at (5.15,-3.1) {$(k+2,n-k-1)$};
\node[above left] at (1,0) {$(1,n-1)$};
\node at (2,-1) {$\cR$};
\node at (2,-5.9) {$\Pi_p$};
\node at (7.1,-1) {$\Pi'_p$};
\end{tikzpicture}
\caption{The right corner for $k=3$ with the extra vertex $c^*=(k+1,n-k-1)$ (red). Dashed red lines: the
cuts between $\Pi_p$, $\cR$ and $\Pi'_p$. Squares: the two exterior vertices that become adjacent to two
pieces.}\label{fig:extra}
\end{figure}

\begin{lemma}\label{lem:extra}
Assume \eqref{eq:hyp}. For $\theta\in\{0,1\}$ and $w,w'\in W_s$ let
\[
C^{\cR'}_\theta[w,w']=\min\{\ell(D): D\subseteq\cR',\ I(\cR')\subseteq N[D],\ [c^*\in D]=\theta,\
\phi_D(\beta)=w,\ \phi_D(\sigma(\beta))=w'\},
\]
and let $L^{\theta}_s[w,w']=\lambda_s(w,w')+\theta\,[w_k=0]$ if $(w,w')$ is compatible and, in addition,
$w'_k\ne2$ or $w_k=0$; otherwise let $L^{\theta}_s[w,w']=+\infty$. Let $H'$ be the entrywise minimum of the
two matrices $L^{\theta}_s\ox C^{\cR'}_\theta\ox(L^{\theta}_s)^{T}$, $\theta\in\{0,1\}$. Then
\begin{equation}\label{eq:bprime}
b(B')=\min_{u,u'\in W_d}\bigl(A'_p[u,u']+G_m[u,u']\bigr),\qquad A'_p=C^{\cV}_p\ox H'\ox(C^{\cV}_p)^{T}.
\end{equation}
For $(k,e)=(10,6)$ we have $B'=B'_n$.
\end{lemma}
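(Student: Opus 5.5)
The plan is to repeat the proof of Theorem~\ref{thm:glue}, this time with the pieces $\Pi_p$, $\cR'$, $\Pi'_p$, $\cD_m$, and to track the single place where the geometry changes: the neighborhood of $c^*$. The first step redoes Lemma~\ref{lem:structure} for $B'=\Pi_p\sqcup\cR'\sqcup\Pi'_p\sqcup\cD_m$. The extra vertex $c^*$ is adjacent to $\beta_k$ and $\sigma(\beta_k)$ (both in $\cR$) and to $z_1,z_2$. Here $z_1=(k+1,n-k-2)$ is the column-$(k+1)$ neighbor of $\alpha_k$, and $z_2$ is the corresponding neighbor of $\sigma(\alpha_k)$. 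So no new crossing edges appear. The changes are these: $z_1$ (resp.\ $z_2$) is now an exterior vertex with neighbors in two pieces, namely $\alpha_k\in\Pi_p$ and $c^*\in\cR'$ (resp.\ $\sigma(\alpha_k)$ and $c^*$). In addition, $\beta_k$ and $\sigma(\beta_k)$ now have all their neighbors in $B'$, so they become interior vertices of $B'$. One checks, as in part (3) of Lemma~\ref{lem:structure}, that no vertex lies in three closed neighborhoods, so Lemma~\ref{lem:glue} still applies. One also checks that $B_n(10,6)\cup\{c^*\}$ is exactly $B'_n$, since $(11,n-11)=(k+1,n-k-1)$ for $k=10$.

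The second step recomputes the gluing cost at the cut $\alpha|\beta$, in the spirit of Lemma~\ref{lem:cuts}. By Lemma~\ref{lem:glue}(c), $N[D_1]\cap N[D_2]$ consists of the previous $\lambda_s(w,w')$ contributions plus the vertex $z_1$. The vertex $z_1$ is counted exactly when $\alpha_k\in D_1$ and $c^*\in D_2$, that is, when $w_k=0$ and $\theta=1$. This gives the extra term $\theta[w_k=0]$. The extra finiteness condition comes from $\beta_k$ now being interior: it must be dominated. Its possible dominators are $\alpha_k$, $c^*$ and vertices of $\cR$. If $\phi_{D_2}(\beta_k)\ne 2$, the vertex is dominated inside $\cR'$ (this already accounts for $c^*$, which belongs to the piece). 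Otherwise it must be dominated by $\alpha_k$, i.e.\ $w_k=0$. This is the condition ``$w'_k\ne2$ or $w_k=0$''; in $L^\theta_s$ the first argument is the $\alpha$-word and the second the $\beta$-word. One must make sure the indexing convention matches the one in $H'$, and I expect checking this orientation to be the fiddliest part. The symmetric cut $\sigma(\alpha)|\sigma(\beta)$ gives the transpose, with the same $\theta$ because $c^*$ is shared. Minimizing over $\theta$ is what makes $H'$ an entrywise minimum of two products rather than a single product.

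The third step concludes as in Theorem~\ref{thm:glue}. Let $D$ be optimal. Its traces dominate the piece interiors, where the interior of $\cR'$ is taken inside $\cR'$ and the domination of $\beta_k$ and $\sigma(\beta_k)$ is handled by $L^\theta_s$ as above. The remaining slice conditions are unchanged from Lemma~\ref{lem:cuts}, and $\ell(D)$ splits as the sum of piece losses plus the cut costs, which yields ``$\ge$''. The reverse inequality follows by gluing minimizers, exactly as before. The main obstacle is bookkeeping the interior status of $\beta_k$ and $\sigma(\beta_k)$ and the exterior vertices $z_1,z_2$ without double counting. In particular one must verify that $I(\cR')$ is computed with respect to $\cR'$, so that the domination of $\beta_k$ by $\alpha_k$ is precisely the case left to $L^\theta_s$.
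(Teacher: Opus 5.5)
Your proposal is correct and follows the paper's proof essentially step for step. It uses the same identification of $z_1,z_2$ as the new exterior vertices touching two pieces, which gives the extra term $\theta[w_k=0]$, and of $\beta_k,\sigma(\beta_k)$ as the new interior vertices of $B'$, which gives the condition ``$w'_k\ne2$ or $w_k=0$''. The conclusion is the same reduction to Theorem~\ref{thm:glue}, split according to $\theta=[c^*\in D]$. The one point the paper states explicitly and you leave implicit is that $\alpha_k$ and $c^*$ are not in $I(B')$, because $z_1,z_2\notin B'$, so no further domination constraint arises.
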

\begin{proof}
The sets $\Pi_p$, $\cR'$, $\Pi'_p$ and $\cD_m$ partition $B'$. The neighbors of $c^*$ are $\beta_k$,
$\sigma(\beta_k)$, $z_1$ and $z_2$; moreover $z_1$ is adjacent to $\alpha_k\in\Pi_p$ and
$z_2$ to $\sigma(\alpha_k)\in\Pi'_p$, and neither $z_1$ nor $z_2$ has another neighbor in $B'$. Hence the
hypothesis of Lemma~\ref{lem:glue} still holds, and with respect to Lemma~\ref{lem:structure} the only
changes are the following. First, $z_1$ is a vertex outside $B'$ with neighbors in $\Pi_p$ and $\cR'$, so
by Lemma~\ref{lem:glue}(c), with the notation of Lemma~\ref{lem:cuts},
$|N[D_1]\cap N[D_2]|=\lambda_s(w,w')+[\alpha_k\in D_1][c^*\in D_2]$. Second, $\beta_k$ now belongs to
$I(B')$ and its only neighbor outside $\cR'$ is $\alpha_k$, so it is dominated by $D$ if and only if
$w'_k\ne2$ or $w_k=0$; the vertices $\alpha_k$ and $c^*$ are not in $I(B')$, because $z_1\notin B'$. By
the symmetry $\sigma$, the same holds for the cut between $\cR'$ and $\Pi'_p$, with $z_2$,
$\sigma(\beta_k)$ and $\sigma(\alpha_k)$. The pieces $\Pi_p$, $\Pi'_p$, $\cD_m$ and the cuts between
$\cD_m$ and the acute corners are unchanged. The rest of the proof is that of Theorem~\ref{thm:glue},
splitting according to whether $c^*\in D$.
\end{proof}

The matrix $H'$ is computed with the dynamic program described above.
Now we are ready to conclude the proof.

\begin{proof}[Proof of Theorem~\ref{thm:periodic}]
By Propositions~\ref{prop:perD} and~\ref{prop:perS}, $G_{m+5}=G_m+5$ for $m\ge10$ and $A'_{p+1}=A'_p+2$
for $p\ge105$. If $n\ge135$, then $p\ge105$ and $m\ge98$, so by \eqref{eq:bprime}
$b(B'_{n+5})=b(B'_n)+15$. It therefore suffices to compute $b(B'_n)$ for five consecutive values of $n\ge135$;
Table~\ref{tab:periodic} gives ten of them, which were computed directly (with one pass through $\cR$
for $p=108$).
\end{proof}

\begin{table}[ht]
\centering
\begin{tabular}{r|rrrrrrrrrr}
\toprule
$n$ & 138 & 139 & 140 & 141 & 142 & 143 & 144 & 145 & 146 & 147\\\midrule
$b(B_n(10,6))$ & 395 & 396 & 400 & 402 & 405 & 410 & 411 & 415 & 417 & 420\\
$b(B'_n)$ & 396 & 397 & 401 & 403 & 406 & 411 & 412 & 416 & 418 & 421\\
$\nu(n)$ & 393 & 395 & 401 & 401 & 405 & 408 & 410 & 416 & 416 & 420\\
\bottomrule
\end{tabular}
\caption{Band bounds in the periodic regime, without and with the extra vertex.}\label{tab:periodic}
\end{table}

\section{Proof of Theorem~\ref{thm:lower}}\label{sec:lower}

\begin{proposition}[computer-assisted]\label{prop:finite}
\begin{enumerate}[label=\textup{(\alph*)},leftmargin=*]
\item[]
\item $\gamma(F_2(P_n))=d(n)$ for $13\le n\le33$.
\item $b\bigl(B_n(10,3)\cup\{(11,n-11)\}\bigr)\ge\nu(n)$ for $33\le n\le38$.
\item $b(B'_n)\ge\nu(n)$ for $39\le n\le137$.
\end{enumerate}
The values of \textup{(b)} and \textup{(c)} are listed in Table~\ref{tab:range} in Appendix A.
\end{proposition}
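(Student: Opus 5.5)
The three parts are finite computations, and I would organize them so that each check is a comparison of an exactly computed integer with the explicit target $\nu(n)$ (for (b), (c)) or with $d(n)$ (for (a)).

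\emph{Part (a), $13\le n\le 33$.} The upper bound $\gamma(F_2(P_n))\le d(n)$ is Theorem~\ref{thm:upper}, so only $\ell_n\ge\nu(n)$ is needed, by Lemma~\ref{lem:target}. For these small $n$ I would compute $\ell_n=b(V)$ exactly. The method is the cell-by-cell dynamic program of Section~\ref{sec:periodic}, applied to the whole triangle $I(n)$ with every vertex interior. Cells are processed by rows $y=1,\dots,n-1$, and within a row by increasing $x$. The frontier is then at most about $n\le 33$ cells, but the states are words of $W_t$, and $|W_{33}|$ is far too large to handle directly.

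So for the larger $n$ in this range I would instead take the bands $B_n(k,e)$ with $k,e$ large enough that $B$ is all of $V$ or nearly so. I would then evaluate $b$ through Theorem~\ref{thm:glue} and Lemma~\ref{lem:transfer}, but this requires \eqref{eq:hyp}, which fails for small $n$. For that reason I expect (a) to be done best by an exact ILP or branch-and-bound computation of $\gamma(F_2(P_n))$, with a certificate that no dominating set of size $d(n)-1$ exists. The values up to $25$ are already known from \cite{ALR}, so new work is needed only for $26\le n\le33$. This is the main obstacle: the instances have up to $528$ vertices, and the proof of infeasibility at size $d(n)-1$ must be made feasible. I would first try a row-by-row DP over $\phi$-labelings, pruned by the running loss bound against the target $\nu(n)$.

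\emph{Part (b), $33\le n\le 38$.} Here $(k,e)=(10,3)$ and I take $a=k+e+2=15$, so \eqref{eq:hyp} needs $n\ge 33$, which is exactly the range. I would use Lemma~\ref{lem:extra} with this $(k,e,a)$: compute $C^{\cV}_0$ by the dynamic program, then $C^{\cV}_p=C^{\cV}_0\ox T_s^{\ox p}$ with $p=n-27$. Next I would form $H'$, and compute $C^{\cD}_m$ from $C^{\cD}_2$ by powers of $T_d$ with $m=n-31\ge2$ (Lemma~\ref{lem:transfer}). Finally I would evaluate the minimum in \eqref{eq:bprime} and check it against $\nu(n)$ in Table~\ref{tab:range}. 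The smaller $e$ is chosen because $(10,6,18)$ would violate \eqref{eq:hyp} here.

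\emph{Part (c), $39\le n\le 137$.} I would use $(10,6,18)$ and the same pipeline. I would iterate $C^{\cV}_{p+1}=C^{\cV}_p\ox T_s$ and $C^{\cD}_{m+1}=C^{\cD}_m\ox T_d$ incrementally, so that each $n$ costs one multiplication per matrix plus the final gluing. By Lemma~\ref{lem:extra}, $b(B'_n)=\min_{u,u'}(A'_p+G_m)[u,u']$, and comparing with $\nu(n)$ finishes the proof.

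Only the inequality ``$\ge$'' of the gluing is needed, so any error that makes the matrices too large would be harmless for soundness. The cost lies in the $577\times 8119$ by $8119\times 8119$ sparse $(\min,+)$ products, which are routine but heavy.
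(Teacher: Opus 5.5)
Your plan for (b) and (c) matches the paper's. With $(k,e,a)=(10,3,15)$, respectively $(10,6,18)$, one evaluates \eqref{eq:bprime} using $C^{\cV}_p=C^{\cV}_0\ox T_s^{\ox p}$, $C^{\cD}_m=C^{\cD}_2\ox T_d^{\ox(m-2)}$ and a single $H'$, and your indices ($p=n-27$, and $m=n-31\ge2$ exactly when $n\ge33$) are right.

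The gap is in (a): your proposal stops without a computation that can actually be carried out. You rightly discard the row sweep, whose frontier is a path of about $n-1=32$ cells, so the number of states is of order $10^{12}$. The band route is blocked by \eqref{eq:hyp}. Your fallbacks are an ILP or branch-and-bound proof of infeasibility at size $d(n)-1$ on up to $528$ vertices, or the same row sweep pruned by the loss budget. Neither comes with any argument that it finishes in practice, and you concede this by calling it the main obstacle.

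The missing idea is the sweep order. In the triangle $1\le x\le y\le n-1$, rows, columns and the diagonals $y-x=\mathrm{const}$ have up to $n-1$ cells. An anti-diagonal $x+y=s$, however, has at most $\lfloor n/2\rfloor$ cells, because the constraint $x\le y$ cuts it in half. Processing cells by increasing $x+y$ therefore keeps the frontier to about $n/2+1$ cells (about $17$ for $n=33$), which roughly halves the exponent of the state count.

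This is exactly what the paper does. It runs the cell-by-cell program of Section~\ref{sec:periodic} on the whole triangle $I(n)$ in this order, with all vertices interior and no exterior or virtual cells. That returns $\ell_n$, and hence $\gamma(F_2(P_n))$ by \eqref{eq:ln}, for every $13\le n\le33$. It also recomputes the values for $n\le25$ rather than importing them from \cite{ALR}.

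Separately, your closing remark on soundness has the direction reversed. The ``$\ge$'' direction of Theorem~\ref{thm:glue} and Lemma~\ref{lem:extra} requires each finite matrix entry to be a lower bound on the loss of the corresponding piece. It also requires each $+\infty$ to mark a configuration that genuinely cannot occur. So an error that makes the matrices too small, i.e.\ a relaxation, is harmless. An error that makes them too large, such as a missed configuration recorded as $+\infty$, can push the computed minimum above $b(B)$ and invalidate $b(B)\ge\nu(n)$.
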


For (a), the dynamic program of Section~\ref{sec:periodic} is applied to the whole triangle $I(n)$: all
vertices are interior, there are no exterior vertices, and the result is $\ell_n$, whence $\gamma$ by
\eqref{eq:ln}. Processing by increasing $x+y$ the frontier has about $n/2+1$ cells. This reproduces the
values of~\cite{ALR} for $n\le25$ and gives $\gamma(F_2(P_n))=77,83,89,95,102,108,115,122$ for
$n=26,\dots,33$. For (b) and (c), formula \eqref{eq:bprime} is evaluated for each $n$, with
$C^{\cV}_p=C^{\cV}_0\ox T_s^{\ox p}$ and $(k,e,a)=(10,3,15)$ or $(10,6,18)$; the matrix $H'$ depends only
on $k$ and is computed once. In (c) the difference $b-\nu$ equals $0$ exactly for $n=50,55,\dots,135$.

\begin{proof}[Proof of Theorem~\ref{thm:lower}]
For $13\le n\le33$ this is Proposition~\ref{prop:finite}(a). For $n\ge33$, Lemma~\ref{lem:target} applies
with the band of Proposition~\ref{prop:finite}(b),(c) if $n\le137$ and with $B'_n$ and
Theorem~\ref{thm:periodic} if $n\ge135$. This proves Theorem~\ref{thm:lower}.
\end{proof}

\subsection{Computations and validation}

All computational tasks and validation were developed and executed using Python programs, supplemented by a specialized C routine for the \((\min,+)\) product. The complete process was run by Anthropic's Claude AI under the authors' supervision, concluding in different sessions but in less than three hours in total. 

{\bf AI Disclosure}: Anthropic's Claude AI was employed to develop and run the code, generate the figures, and polish the English grammar. The authors have audited the entire process. Also, the author verified all mathematical arguments and computations and takes full responsibility for the final content

\section*{Acknowledgments}
E. Acosta Troncoso is a PhD student of the Programa de Doctorado en Ciencias B\'asicas at Universidad Aut\'onoma de Zacatecas (UAZ) and has received a fellowship (Grant No. 4014068) from Secretar\'ia de Ciencia, Humanidades, Tecnolog\'ia e Innovaci\'on (SECIHTI), formerly Consejo Nacional de Humanidades, Ciencias y Tecnolog\'ias (CONAHCYT). 

\appendix
\section{Band bounds for \texorpdfstring{$33\le n\le137$}{33 <= n <= 137}}

\begin{table}[ht]
\centering\small
\begin{tabular}{rrr@{\qquad}rrr@{\qquad}rrr}
\toprule
$n$ & $b$ & $\nu(n)$ & $n$ & $b$ & $\nu(n)$ & $n$ & $b$ & $\nu(n)$\\\midrule
33 & 79 & 78 & 68 & 186 & 183 & 103 & 291 & 288\\
34 & 83 & 80 & 69 & 187 & 185 & 104 & 292 & 290\\
35 & 86 & 86 & 70 & 191 & 191 & 105 & 296 & 296\\
36 & 89 & 86 & 71 & 193 & 191 & 106 & 298 & 296\\
37 & 91 & 90 & 72 & 196 & 195 & 107 & 301 & 300\\
38 & 94 & 93 & 73 & 201 & 198 & 108 & 306 & 303\\
39 & 98 & 95 & 74 & 202 & 200 & 109 & 307 & 305\\
40 & 102 & 101 & 75 & 206 & 206 & 110 & 311 & 311\\
41 & 104 & 101 & 76 & 208 & 206 & 111 & 313 & 311\\
42 & 107 & 105 & 77 & 211 & 210 & 112 & 316 & 315\\
43 & 112 & 108 & 78 & 216 & 213 & 113 & 321 & 318\\
44 & 113 & 110 & 79 & 217 & 215 & 114 & 322 & 320\\
45 & 117 & 116 & 80 & 221 & 221 & 115 & 326 & 326\\
46 & 119 & 116 & 81 & 223 & 221 & 116 & 328 & 326\\
47 & 122 & 120 & 82 & 226 & 225 & 117 & 331 & 330\\
48 & 126 & 123 & 83 & 231 & 228 & 118 & 336 & 333\\
49 & 128 & 125 & 84 & 232 & 230 & 119 & 337 & 335\\
50 & 131 & 131 & 85 & 236 & 236 & 120 & 341 & 341\\
51 & 134 & 131 & 86 & 238 & 236 & 121 & 343 & 341\\
52 & 137 & 135 & 87 & 241 & 240 & 122 & 346 & 345\\
53 & 141 & 138 & 88 & 246 & 243 & 123 & 351 & 348\\
54 & 143 & 140 & 89 & 247 & 245 & 124 & 352 & 350\\
55 & 146 & 146 & 90 & 251 & 251 & 125 & 356 & 356\\
56 & 149 & 146 & 91 & 253 & 251 & 126 & 358 & 356\\
57 & 152 & 150 & 92 & 256 & 255 & 127 & 361 & 360\\
58 & 156 & 153 & 93 & 261 & 258 & 128 & 366 & 363\\
59 & 158 & 155 & 94 & 262 & 260 & 129 & 367 & 365\\
60 & 161 & 161 & 95 & 266 & 266 & 130 & 371 & 371\\
61 & 164 & 161 & 96 & 268 & 266 & 131 & 373 & 371\\
62 & 166 & 165 & 97 & 271 & 270 & 132 & 376 & 375\\
63 & 171 & 168 & 98 & 276 & 273 & 133 & 381 & 378\\
64 & 172 & 170 & 99 & 277 & 275 & 134 & 382 & 380\\
65 & 176 & 176 & 100 & 281 & 281 & 135 & 386 & 386\\
66 & 178 & 176 & 101 & 283 & 281 & 136 & 388 & 386\\
67 & 181 & 180 & 102 & 286 & 285 & 137 & 391 & 390\\
\bottomrule
\end{tabular}
\caption{The bound $b$ of Proposition~\ref{prop:finite}: $(k,e,a)=(10,3,15)$ for $33\le n\le38$ and
$(10,6,18)$ for $39\le n\le137$, with the extra vertex in both cases.}\label{tab:range}
\end{table}

\end{document}